\newcommand{\ent}{\mathbb Z}
\newcommand{\lac}{\mathscr{C}}
\newcommand{\fracb}[2]{\frac{\raisebox{-.7ex}{$\displaystyle #1$}}{\raisebox{-.7ex}{$\displaystyle #2$}}}
\theoremstyle{plain}
\newtheorem{teo}{Theorem}[section]
\newtheorem{prop}[teo]{Proposition}
\newtheorem{coro}[teo]{Corollary}
\newtheorem{lema}[teo]{Lemma}
\theoremstyle{definition}
\newtheorem{defi}[teo]{Definition}
\theoremstyle{remark}
\newtheorem{rem}[teo]{Remark}
\newtheorem{ejem}[teo]{Example}
\author{Jes\'us Ibarra$^*$, Alberto G. Raggi-C\'ardenas\footnote{CCM, UNAM, Morelia, Mexico.}$\ $ and Nadia Romero\footnote{DEMAT, UGTO, Guanajuato, Mexico. 
}}
\title{The additive completion of the biset category}
\date{ }
\begin{document}

\maketitle

\begin{abstract}
Let $R$ be a commutative unital ring. We construct a category $\lac_R$ of \textit{fractions} $X/G$, where $G$ is a finite group and $X$ is a finite  $G$-set, and with morphisms given by $R$-linear combinations of spans of bisets. 
This category is an additive, symmetric monoidal and self-dual category, 
with a Krull-Schmidt decomposition for objects.  We show that $\lac_R$  is equivalent to the additive completion of the biset category and that  the category of biset functors over $R$ is equivalent to the cate\-gory of $R$-linear functors from $\lac_R$ to $R$-Mod. We also show that the restriction of one of these functors to a certain subcategory of $\lac_R$ is a fused Mackey functor.\\

\noindent
{\bf Keywords:} Biset category, additive completion, biset functor.
\end{abstract}

\section*{Introduction}

The aim of this paper is to give an explicit description of the additive completion of the biset category and to show some of its properties. 

The additive completion of the biset category will be equivalent to the category $\lac_R$, the objects of which are couples $(G,\, X)$ where $G$ is a finite group and $X$ is a finite \textcolor{black}{(left)} $G$-set. These objects will be written as fractions $\frac{X}{G}$
for convenience (or as $X/G$ in case of limited space). One of the nice features of this notation is that $\lac_R$ is an additive symmetric monoidal category with addition and tensor product given by
\begin{displaymath}
\frac{X}{G}\oplus \frac{Y}{H}=\frac{(X\times H)\sqcup (G\times Y)}{G\times H}\quad\textrm{and}\quad \frac{X}{G}\otimes \frac{Y}{H}=\frac{X\times Y}{G\times H}.
\end{displaymath}
The arrows and the composition in $\lac_R$ will be explained in detail in the next section.

We will see that objects of the form $\{\bullet \}/G$ are indecomposable in $\lac_R$ and that every object in $\lac_R$ can be written as a sum of this kind of objects in a unique way, up to isomorphism. This gives a Krull-Schmidt decomposition for objects in $\lac_R$, although this does not make $\lac_R$ a Krull-Schmidt category, in the sense of the formal definition, because the endomorphism ring of $\{\bullet\}/G$ in $\lac_R$ is isomorphic to the double Burnside ring $RB(G,\,G)$, which in general is not a local ring. We will also show that  $\lac_R$ is a self-dual category and that it has a Mackey decomposition for arrows. 

In the last section we consider $R$-linear functors from $\lac_R$ to $R$-Mod and their relation with Mackey and biset functors. First we will prove that for a finite group $G$ we have a functor from the Burnside category (the category having as objects the isomorphism classes of finite $G$-sets and as arrows the spans of $G$-sets) to $\lac_R$. This functor is injective in objects but it is not full, nor faithful. Nevertheless, we will see that by pre-composition with this functor, an $R$-linear functor from $\lac_R$ to $R$-Mod gives a \textit{fused Mackey functor}, as defined in \cite{bham}. On the other hand, we prove that the category of biset functors is equivalent to the category of $R$-linear functors from $\lac_R$ to $R$-Mod. As an example, we see that the Burnside biset functor extends to the functor from $\lac_R$ to $R$-Mod which sends an object $X/G$ to the Burnside group $B(X)$.
 
This paper originates in results appearing in the PhD thesis of the first author \cite{tachosth}. A related, more general, construction to our category $\lac_R$ is given by Nakaoka \cite{naka}.

\section{Preliminaries}

In this section $G$, $H$ and $K$ will be  finite groups.

\subsection*{About $G$-sets}

We will write $B(G)$ for the Burnside group (ring) of finite $G$-sets.
 
Recall, for example from Section 1.6 in Bouc \cite{boucbook}, that given a $G$-set $X$, a couple $(Y,\, \varphi)$ is called \textit{a $G$-set over $X$} if $Y$ is a $G$-set and $\varphi$ is a morphism of $G$-sets from $Y$ to $X$. A morphism from $(Y,\, \varphi)$ to $(Y',\, \varphi')$ is a morphism of $G$-sets $f:Y\rightarrow Y'$ such that $\varphi' f=\varphi$. Whit this data one can define the category of $G$-sets over $X$, denoted by $G$-set$\downarrow_X$. The Grothendieck group of the category $G$-set$\downarrow_X$, for relations given by decomposition into disjoint union, is denoted by $B(X)$. It is easy to see that $B(X)$ has an additive structure and it is thus called the  \textit{Burnside group of the $G$-set $X$}. By Proposition 2.4.2 in \cite{boucbook}, if $X$ is a transitive $G$-set, isomorphic to a set of left cosets $G/H$, then $B(X)$ coincides with $B(H)$, the Burnside group of the group $H$, this will also be a consequence of Lemma \ref{punto}. This construction can be endowed with a structure of Green functor, for more details see Section 2.4 in \cite{boucbook}. In particular, $B(X)$ has a multiplicative structure, given explicitly by the pullback in the category $G$-set, extended by bilinearity.

The \textit{Burnside category} (see for example Lindner \cite{lindner}), denoted by $Span(G$-set$)$ or $Span_R(G$-set$)$ if we are taking coefficients in a commutative unital ring $R$, is the category of spans in the category $G$-set, namely: objects in $Span(G$-set$)$ are finite $G$-sets, and the set of morphisms from a $G$-set $X$ to a $G$-set $Y$, denoted by $B^G(Y\times X)$, is defined as follows. The set $Y\times X$ has a natural structure of $G$-set, so we consider the elements of the form $(S,\, \beta\times\alpha)$ in $B(Y\times X)$,  where 
\begin{displaymath}
\xymatrix{
&S\ar[dl]_{\beta}\ar[dr]^{\alpha}&\\
Y & &X
}
\end{displaymath}
is a span in $G$-set and $(\beta\times\alpha)(s)=(\beta(s),\,\alpha(s))$.
Then $B^G(Y\times X)$ is generated by the equivalence classes of these elements 
under the relation which makes $Y\leftarrow S\rightarrow X$ equivalent to \mbox{$Y\leftarrow U\rightarrow X$} if there is an isomorphism of $G$-sets between $S$ and $U$ which commutes with the projection maps to $X$ and $Y$. Composition of morphisms in $Span(G$-set$)$ is given by the pullback in the category $G$-set and extended by bilinearity. We will denote by $\langle S,\beta,\alpha\rangle$ the equivalence class of $(S,\, \beta\times\alpha)$ in $B^G(Y\times X)$.

In $Span_R(G$-set$)$, morphisms are given by $RB^G(Y\times X):=R\otimes_{\ent}B^G(Y\times X)$.

Lindner shows in \cite{lindner} that Mackey functors can be defined using the Burnside category.

\begin{defi}
Let $R$ be a commutative unital ring and $G$ be a finite group. A Mackey functor for $G$ over $R$ is an $R$-linear functor from $Span_R(G$-set$)$ to the category of $R$-modules, $R$-Mod. 

The category of Mackey functors, with arrows given by natural transformations, is denoted by
\begin{displaymath}
Fun_R(Span_R(G\textrm{-set}),\, R\textrm{-Mod}).
\end{displaymath}
\end{defi}

\subsection*{About bisets}

For a commutative unital ring $R$, the \textit{biset category} with coefficients in $R$, denoted by $\Omega_R$ is the category having as objects the class of all finite groups, and as morphisms from a group $G$ to a group $H$ the Burnside group $RB(H\times G)=R\otimes_{\ent} B(H\times G)$. There is a one-to-one correspondence between $(H\times G)$-sets and $(H,\, G)$-bisets, which are sets endowed with compatibles left $H$-action and right $G$-action, given by the following relation
\begin{displaymath}
(h,\, g)x=hxg^{-1},
\end{displaymath}
for $x$ in $X$, an $(H\times G)$-set ($(H,\, G)$-biset). So, basic elements in $RB(H\times G)$ will be usually referred to as $(H,\, G)$-bisets, and $RB(H\times G)$ will also be denoted by $RB(H,\, G)$. Given $U$ an $(H,\, G)$-biset and $V$ a $(K,\, H)$-biset, the composition of $V$ and $U$, denoted by $V\times_H U$ is the set of $H$-orbits on the cartesian product $V\times U$, where the right action of $H$ is defined by
\begin{displaymath}
 \forall (v,\, u)\in V\times U,\, \forall h\in H,\ (v,\, u)\cdot h=(v\cdot h,\, h^{-1}\cdot u). 
\end{displaymath}
This product has a natural structure of $(K,\, G)$-biset. Composition in $\Omega_{R}$ is given by extending by bilinearity this product. The identity element in $RB(G,\,G)$ is (the class of) the $(G,\,G)$-biset $G$.

\begin{defi}
Let $R$ be a commutative unital ring. A biset functor is an $R$-linear functor from $\Omega_R$ to the category of $R$-modules, $R$-Mod. 

The category of biset functors, with arrows given by natural transformations, is denoted by
\begin{displaymath}
Fun_R(\Omega_R,\, R\textrm{-Mod}).
\end{displaymath}
\end{defi}

Basic properties of the biset category and of biset functors can be found in Bouc \cite{biset}.

\section{Construction of the category}

The category we will define in this section has some similarities with both, the Burnside category and the biset category. 

Throughout this section, $R$ will be a commutative unital ring. 

\begin{defi}
\label{lacat}
The category $\lac_R$ is defined in the following way:
\begin{itemize}
\item[$\bullet$] Objects are the couples $(G,\, X)$, where $G$ is a finite group and $X$ is a finite $G$-set. The couple $(G,\, X)$ will be denoted as a fraction $\frac{X}{G}$, or as $X/G$ in case of limited space. The reason for this notation will become clear in the next theorem. 
\item[$\bullet$] Let $X/G$ and $Y/H$ be objects in $\lac_R$. Then $Y$ is an $(H\times G)$-set through the projection map from $H\times G$ to $H$, and $X$ is an $(H\times G)$-set in a similar way. 
So we set 
\begin{displaymath}
Hom_{\lac_R}\left(\frac{X}{G},\, \frac{Y}{H}\right)=RB^{H\times G}(Y\times X),
\end{displaymath}
which we will write from now on as $RB^{H,\, G}(Y,\, X)$, to denote a set of arrows.

\item[$\bullet$] Composition of morphisms is given as follows: Let $\langle U,\beta,\alpha\rangle$ be a generator of $RB^{H,\, G}(Y,\, X)$ and $\langle V,\delta, \gamma\rangle$ be a generator of $RB^{K,\, H}(Z,\, Y)$. Then the composition $\langle V,\delta,\gamma\rangle\circ\langle U,\beta,\alpha\rangle$ is defined as $\langle V\hat{\times}_H U,\hat{\delta},\hat{\alpha}\rangle$ in $RB^{K,\, G}(Z,\, X)$, where $V\hat{\times}_HU$ and the morphisms $\hat{\delta}$ and $\hat{\alpha}$ are defined after the diagram. 
\[
\xymatrix{
&& V\hat{\times}_HU\ar@/_1pc/[dddll]_{\hat\delta} \ar@/^1pc/[dddrr]^{\hat\alpha} &&\\
&& W\ar[u]\ar[rd]^\varepsilon\ar[ld]_{\eta} &&\\
& V\ar[rd]^{\gamma}\ar[ld]_\delta & & U \ar[rd]^{\alpha}\ar[ld]_{\beta}& \\
Z && Y && X 
}
\]
The pullback $W$ of $\gamma$ and $\beta$ is the subset of $V\times U$ of couples $(v,\, u)$ such that $\gamma (v)=\beta(u)$, together with the projections $\eta$ and $\epsilon$. If we consider $V$ as a $(K,\, H)$-biset and $U$ as an $(H,\, G)$-biset, then $V\times U$ has a right action of $H$ as described in the previous section, and $W$ is stable under this action, thus $V\hat{\times}_HU$ is the set of $H$-orbits of $W$, that is
\begin{displaymath}
V\hat{\times}_HU=\{[v,\, u]\in V\times_HU\mid \gamma(v)=\beta(u)\},
\end{displaymath}
which has a structure of $(K\times G)$-set as seen in the previous section. The maps $\hat{\delta}[v,\, u]=\delta(v)$ and $\hat{\alpha}[v,\, u]=\alpha(u)$ are well defined by the next remark. It is also easy to see that they are morphisms of $(K\times G)$-sets.

In the next proposition we prove that this product is well defined, and that it can be extended by bilinearity to give a composition in $\lac_R$.
 
\item[$\bullet$] Let $X/G$ be an object in $\lac_R$. Recall that $X$ can be seen as a $(G\times G)$-set through the projections on the first or second variable, $\pi_1$ and $\pi_2$ respectively. In the set of morphisms  $RB^{G,\, G}(X,\, X)$, we are considering the $X$ on the left as a $(G\times G)$-set through $\pi_1$ and the one on the right through $\pi_2$. 

The identity element in $RB^{G,\, G}(X,\, X)$ is $\langle G\times X, \underline{1},\overline{1}\rangle$, where $G\times X$ is seen as a $(G\times G)$-set through the action $(a,\, b)(g,\, x)=(agb^{-1},\, ax)$ and the maps
 $\underline{1},\overline{1}:G\times X\to X$ are defined by $\overline{1}(g,x)=g^{-1}x$ and $\underline{1}(g,x)=x$. They are easily seen to be morphisms of $(G\times G)$-sets.
\end{itemize}
\end{defi}

We verify in the next proposition and theorem that $\lac_R$ is an additive monoidal category.

\begin{rem}
Let $X/G$ and $Y/H$ be objects in $\lac_R$. A triple $\langle U,\beta,\alpha\rangle$, with $U$ an $(H\times G)$-set, is in $RB^{H,\,G}(Y,\, X)$, 
if and only if $\beta:U\rightarrow Y$ and \mbox{$\alpha:U\rightarrow X$} are functions that satisfy $\beta ((h,\, g)u)=h\beta (u)$, for all $g\in G$ and \mbox{$\alpha ((h,\, g)u)=g\alpha (u)$} for all $h\in H$.
This is because the action of $G$ in $Y$ is trivial, and so is the action of $H$ in $X$. In particular if we see $U$ as an $(H,\, G)$-biset, these conditions translate as $\beta(hug)=h\beta(u)$ for all $g\in G$ and $\alpha (hug)=g^{-1}\alpha(u)$ for all $h\in H$.
\end{rem}

To verify that $\lac_R$ is indeed a category, we prove first some properties of the composition.

\begin{prop}
Consider the composition of generating morphisms in $\lac_R$ as in Definition \ref{lacat}. This composition is well defined, associative and bilinear. Also, the identity morphism for the object $X/G$ is given by $\langle G\times X,\, \underline{1},\, \overline{1} \rangle$.
\end{prop}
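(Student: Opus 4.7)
The plan is to verify each claim in turn, using the preceding remark as the main technical tool so that one can pass freely between the $(H\times G)$-set and $(H,G)$-biset descriptions of arrows.

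For well-definedness, I first check that $\hat\delta[v,u]=\delta(v)$ and $\hat\alpha[v,u]=\alpha(u)$ do not depend on the chosen representative of the $H$-orbit. By the remark, the $(K\times H)$-action on $Z$ and the $(H\times G)$-action on $X$ are trivial on the $H$-factor, so $\delta(vh)=\delta(v)$ and $\alpha(h^{-1}u)=\alpha(u)$. A direct check then gives that $\hat\delta$ and $\hat\alpha$ are morphisms of $(K\times G)$-sets, so $\langle V\hat\times_H U,\hat\delta,\hat\alpha\rangle$ does lie in $RB^{K,G}(Z,X)$. Independence from the representatives of the two generators is routine: an isomorphism of spans is $H$-equivariant on the pullback and descends to an isomorphism of $(K\times G)$-sets of the orbit quotients that commutes with the induced maps.

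Bilinearity is immediate, since pullback along a fixed morphism and the formation of $H$-orbits both distribute over disjoint union, and disjoint union is the addition on the relevant Burnside groups. For the identity assertion, I would compute $\langle U,\beta,\alpha\rangle\circ\langle G\times X,\underline 1,\overline 1\rangle$: the pullback of $\alpha$ and $\underline 1$ is $\{(v,(g,x)):\alpha(v)=x\}$, and each $G$-orbit contains a unique representative of the form $[v,(1,\alpha(v))]$. The bijection $[v,(1,\alpha(v))]\mapsto v$ is then an isomorphism of $(H\times G)$-sets under which the two induced projections of the composite become $\beta$ and $\alpha$, so the composite equals $\langle U,\beta,\alpha\rangle$. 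The left identity is entirely symmetric.

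Associativity is the main substantive step. Given a third generator $\langle W,\zeta,\xi\rangle\in RB^{L,K}(T,Z)$, my goal is to show that both parenthesizations $(\langle W,\zeta,\xi\rangle\circ\langle V,\delta,\gamma\rangle)\circ\langle U,\beta,\alpha\rangle$ and $\langle W,\zeta,\xi\rangle\circ(\langle V,\delta,\gamma\rangle\circ\langle U,\beta,\alpha\rangle)$ are canonically isomorphic to the $(L\times G)$-set of equivalence classes $[w,v,u]$ in $W\times V\times U$ subject to $\xi(w)=\delta(v)$ and $\gamma(v)=\beta(u)$, modulo the relations $(w,v,u)\sim(wk,k^{-1}v,u)$ for $k\in K$ and $(w,v,u)\sim(w,vh,h^{-1}u)$ for $h\in H$, with projections to $T$ and $X$ given by $\zeta$ and $\alpha$. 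This reduces to the associativity of pullback in the category of sets together with the commutation of the $K$- and $H$-actions on $V$, so that the two quotients can be taken in either order.
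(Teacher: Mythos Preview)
Your proposal is correct and follows essentially the same approach as the paper's proof: both establish well-definedness by descending an isomorphism of spans through the $H$-orbit quotient, prove associativity by identifying each parenthesization with the common ``triple'' set $\{[w,v,u]:\xi(w)=\delta(v),\ \gamma(v)=\beta(u)\}$ modulo the commuting $K$- and $H$-actions, handle bilinearity via distributivity of pullback and orbit formation over disjoint union, and verify the identity by the bijection $U\hat\times_G(G\times X)\cong U$ sending $[u,(g,x)]$ to $ug$ (your normal form $[v,(1,\alpha(v))]$ is precisely the inverse of the paper's map $\psi$). The only difference is presentational: the paper spells out several computations that you label ``routine'' or ``immediate''.
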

\begin{proof}
We begin by showing that the composition is well defined. Let $\langle U,\, \beta,\,\alpha\rangle$, $\langle U',\,\beta',\,\alpha'\rangle$ be in $RB^{H,\, G}(Y,\, X)$, and $\langle V,\,\delta,\, \gamma\rangle$, $\langle V',\, \delta',\,\gamma'\rangle$ be in $RB^{K,\, H}(Z,\, Y)$ with  $(U,\,\beta,\,\alpha)\backsimeq (U',\,\beta',\,\alpha')$ and $(V,\,\delta,\,\gamma)\backsimeq (V',\,\delta',\,\gamma')$. We have the diagram
\[
\xymatrix{
&& V\hat{\times}_HU\ar@/_1pc/[dddll]_{\hat\delta} \ar@/^1pc/[dddrr]^{\hat\alpha} &&\\
&& W\ar[u]\ar[rd]\ar[ld] &&\\
& V\ar[dd]^{\psi}\ar[rd]^{\gamma}\ar[ld]_{\delta} & & U\ar[dd]_{\varphi}\ar[rd]^{\alpha}\ar[ld]_{\beta}& \\
Z && Y && X \\
 & V'\ar[ru]_{\!\gamma'}\ar[lu]^{\delta'\!\!} & & U' \ar[ru]_{\!\!\alpha'}\ar[lu]^{\beta'\!}& \\
 && W'\ar[d]\ar[ru]\ar[lu] && \\
 && V'\hat{\times}_HU'\ar@/^1pc/[uuull]^{\hat\delta'} \ar@/_1pc/[uuurr]_{\hat\alpha'} &&\\
}
\]
where
\begin{displaymath}
W= \left\{(v,u)\in V\times U\mid\gamma(v)=\beta(u)\right\},\
W'= \left\{(v',u')\in V'\times U'\mid=\gamma'(v')=\beta'(u')\right\},
\end{displaymath}
\begin{displaymath}
V\hat{\times}_HU= \left\{[v,u]\in V\times_H U\mid\gamma(v)=\beta(u)\right\}\textrm{ and}
\end{displaymath}
\begin{displaymath}
V'\hat{\times}_HU'= \left\{[v',u']\in V'\times_H U'\mid\gamma'(v')=\beta'(u')\right\}.
\end{displaymath}
Suppose that $\varphi: U\rightarrow U'$ is an isomorphism of $(H,\,G)$-bisets such that $\beta=\beta'\varphi$ and $\alpha=\alpha'\varphi$, and that $\psi:V\rightarrow V'$ is an isomorphism of $(K,\, H)$-bisets such that $\delta=\delta'\psi$ and $\gamma=\gamma'\psi$. We define
\begin{displaymath}
\rho:V\hat{\times}_HU\rightarrow V'\hat{\times}_HU'\textrm{ s.t. }[v,\, u]\mapsto [\psi(v),\, \varphi(u)].
\end{displaymath}
Since $\psi$ and $\varphi$ respect the action of $H$ in $V$ and $U$, respectively, it is easy to see that $\rho$ is well defined. Also, since $\delta=\delta'\psi$ and $\gamma=\gamma'\psi$, then $[\psi(v),\, \varphi(u)]$ is indeed in $V'\hat{\times}_HU'$. It is also clear that $\rho$ is a morphisms of $(K,\, G)$-bisets, and that we can defined an inverse, from $V'\hat{\times}_HU'$ to $V\hat{\times}_HU$, satisfying analogous properties, i.e. $\rho$ is an isomorphism of $(K,\,G)$-bisets. Finally, to see that $\hat{\delta}=\hat{\delta}'\rho$ and $\hat{\alpha}=\hat{\alpha}'\rho$, it suffices to follow the definitions of $\hat{\delta}$, $\hat{\alpha}$, given in the previous definition.

We now prove associativity. Let
\begin{displaymath}
\xymatrix{\fracb{X_1}{G}\ar[rr]^{\langle U,\,\beta_1,\,\alpha_1 \rangle}&&\fracb{X_2}{H}\ar[rr]^{\langle V,\,\beta_2,\,\alpha_2\rangle}&& \fracb{X_3}{K}\ar[rr]^{\langle W,\,\beta_3,\,\alpha_3\rangle}&& \fracb{X_4}{L}}
\end{displaymath}
be generating morphisms in $\lac_R$. Then we have the following diagram for $\langle W,\,\beta_3,\,\alpha_3\rangle\circ(\langle V,\,\beta_2,\,\alpha_2\rangle\circ\langle U,\,\beta_1,\,\alpha_1 \rangle)$:
\[
\xymatrix{
& &W\hat{\times}_KZ\ar@/_1pc/[llddd]_{\hat{\beta_3}} \ar@/^/[rrrrddd]_{\widehat{\hat{\alpha_1}}} &&  V\hat{\times}_HU\ar@/_1pc/[llddd]_{\hat{\beta_2}}\ar@/^1pc/[rrddd]^{\hat{\alpha_1}} && \\
&& T\ar[u]\ar[ld]\ar[rru] && S\ar[u]\ar[ld]\ar[rd] &\\
& W\ar[ld]_{\beta_3\!\!}\ar[rd]^{\!\alpha_3} && V\ar[ld]_{\beta_2\!}\ar[rd]^{\!\alpha_2} &&U\ar[ld]_{\beta_1\!}\ar[rd]^{\!\!\alpha_1} &\\
X_4 && X_3 && X_2 && X_1    
}
\]
where $Z=V\hat{\times}_HU$. That is 
\begin{displaymath}
W\hat{\times}_K(V\hat{\times}_HU)=\{[w,\,[v,\, u]]\in W\times_K(V\hat{\times}_HU)\mid \alpha_3(w)=\hat{\beta_2}([v,\, u])\},
\end{displaymath}
where $[v,\, u]$ in $[w,\,[v,\, u]]$ satisfies $\alpha_2(v)=\beta_1(u)$, which means that
\begin{displaymath}
W\hat{\times}_K(V\hat{\times}_HU)=\{[w,\,[v,\, u]]\in W\times_K(V\times_HU)\mid \alpha_3(w)=\beta_2(v),\ \alpha_2(v)=\beta_1(u)\}.
\end{displaymath}
The corresponding morphisms are given by 
\begin{displaymath}
\hat{\beta_3}([w,\,[v,\, u]])=\beta_3(w)\quad\textrm{and}\quad\widehat{\hat{\alpha_1}}([w,\,[v,\, u]])=\hat{\alpha_1}([v,\, u])=\alpha_1(u).
\end{displaymath}
On the other hand, by a similar reasoning, we have
\begin{displaymath}
(W\hat{\times}_KV)\hat{\times}_HU=\{[[w,\,v],\, u]\in (W\times_KV)\times_HU\mid \alpha_3(w)=\beta_2(v),\ \alpha_2(v)=\beta_1(u)\},
\end{displaymath}
with corresponding morphisms
\begin{displaymath}
\widehat{\hat{\beta_3}}([[w,\,v],\, u])=\beta_3(w)\quad\textrm{and}\quad\tilde{\alpha_1}([[w,\, v],\, u])=\alpha_1(u).
\end{displaymath}
Then, it is not hard to see that the isomorphism $W\times_K(V\times_HU)\cong (W\times_KV)\times_HU$ induces an isomorphism $W\hat{\times}_K(V\hat{\times}_HU)\cong(W\hat{\times}_KV)\hat{\times}_HU$ which commutes with $\hat{\beta_3}$, $\widehat{\hat{\alpha_1}}$ and $\widehat{\hat{\beta_3}}$, $\tilde{\alpha_1}$. This means that we can write $W\hat{\times}_KV\hat{\times}_HU$ and identify it with
\begin{displaymath}
\{[w,\,v,\, u]\in W\times_KV\times_HU\mid \alpha_3(w)=\beta_2(v),\ \alpha_2(v)=\beta_1(u)\}.
\end{displaymath}
Next we show that this composition is bilinear. Recall that addition of morphisms is given by disjoint union. Let $\langle U_1,\,\beta_1,\,\alpha_1\rangle$ and $\langle U_2,\,\beta_2,\,\alpha_2\rangle$ be generating arrows from $X/G$ to $Y/H$ and $\langle V,\,\delta,\,\gamma\rangle$ from $Y/H$ to $K/Z$. We will prove that
\begin{displaymath}
\langle V,\,\delta,\,\gamma\rangle\circ(\langle U_1,\,\beta_1,\,\alpha_1\rangle +\langle U_2,\,\beta_2,\,\alpha_2\rangle)= \langle V,\,\delta,\,\gamma\rangle\circ\langle U_1,\,\beta_1,\,\alpha_1\rangle+\langle V,\,\delta,\,\gamma\rangle\circ\langle U_2,\,\beta_2,\,\alpha_2\rangle.
\end{displaymath}
For $\langle V,\,\delta,\,\gamma\rangle\circ(\langle U_1,\,\beta_1,\,\alpha_1\rangle +\langle U_2,\,\beta_2,\,\alpha_2\rangle)$, we have the diagram
\[
\xymatrix{
&& V\hat{\times}_H(U_1\sqcup U_2)\ar@/_1pc/[dddll]_{\hat\delta} \ar@/^1pc/[dddrr]^{\widehat{\alpha_1\sqcup \alpha_2}} &&\\
&& W\ar[u]\ar[rd]\ar[ld] &&\\
& V\ar[rd]^{\gamma}\ar[ld]^\delta & & U_1\sqcup U_2 \ar[rd]_{\alpha_1\sqcup\alpha_2}\ar[ld]_{\beta_1\sqcup \beta_2}& \\
Z && Y && X 
}
\]
where
\begin{displaymath}
V\hat{\times}_H(U_1\sqcup U_2)=\left\{[v,\,u]\in V\times_H \left(U_1\sqcup U_2\right)\mid\gamma(v)=(\beta_1\sqcup \beta_2)(u)\right\},
\end{displaymath}
with $\hat{\delta}([v,\, u])=\delta(v)$ and $\widehat{\alpha_1\sqcup\alpha_2}([v,\, u])=(\alpha_1\sqcup\alpha_2)(u)$.

On the other hand, $\langle V,\,\delta,\,\gamma\rangle\circ\langle U_i,\,\beta_i,\,\alpha_i\rangle$ is equal to $\langle V\hat{\times}_HU_i,\,\hat{\delta}_i,\, \hat{\alpha}_i\rangle$, for $i=1,\,2$, where
\begin{displaymath}
V\hat{\times}_HU_i=\left\{[v,u]\in V\times_H U_i\mid\gamma(v)=\beta_i(u)\right\}, 
\end{displaymath}
with $\hat{\delta}_i([v,\,u])=\delta_i(v)$ and $\hat{\alpha}_i([v,\, u])=\alpha_i(u)$.
 
With these descriptions it is clear that $V\hat{\times}_H(U_1\sqcup U_2)=(V\hat{\times}_HU_i)\sqcup (V\hat{\times}_HU_i)$ and that $\hat{\delta}=\hat{\delta}_1\sqcup \hat{\delta}_2$ and $\widehat{\alpha_1\sqcup\alpha_2}=\hat{\alpha}_1\sqcup\hat{\alpha}_2$.

Analogous arguments show linearity on the other variable.

Finally, to prove that $\langle G\times X,\, \underline{1},\, \overline{1} \rangle$ is the identity morphism for $X/G$, let $\langle U,\, \beta,\, \alpha\rangle$ be a generating morphism in $RB^{H,\, G}(Y,\, X)$. The composition $\langle U,\, \beta,\, \alpha\rangle\circ\langle G\times X,\, \underline{1},\, \overline{1}\rangle$ is given by
\begin{displaymath}
\langle U\hat{\times}_G(G\times X),\, \hat{\beta},\, \hat{\overline{1}}\rangle,
\end{displaymath}
where $\hat{\beta}([u,\, (g,\, x)])=\beta (u)$ and $\hat{\overline{1}}([u,\, (g,\, x)])=\overline{1}((g,\, x))=g^{-1}x$.

We define 
\begin{displaymath}
\varphi:U\hat{\times}_G(G\times X)\rightarrow U,\quad\textrm{as}\quad [u,\, (g,\, x)]\mapsto ug.
\end{displaymath}
It is easy to see that it is well defined and is a morphism of $(H,\, G)$-bisets. Observe that an element $[u,\, (g,\,x)]$ in $U\hat{\times}_G(G\times X)$ satisfies $\alpha(u)=\underline{1}(g,\, x)=x$, so it is not hard to see that the inverse of $\varphi$ is given by 
\begin{displaymath}
\psi:U\rightarrow U\hat{\times}_G(G\times X),\quad u\mapsto [u,\,(1,\, \alpha(u))].
\end{displaymath}
Finally, given the definitions of $\hat{\beta}$ and $\hat{\overline{1}}$, it is straightforward to see that $\alpha\varphi=\hat{\overline{1}}$ and $\beta\varphi=\hat{\beta}$.

Similar arguments show that $\langle G\times X,\, \underline{1},\, \overline{1}\rangle$ is the identity when composed with a generating morphism $\langle V,\,\gamma,\,\delta\rangle$ in $RB^{G,\, H}(X,\, Y)$.
\end{proof}

\begin{teo}
Letting $\lac_R$ be as in Definition \ref{lacat}, we have that it is an additive, symmetric monoidal and self-dual category, with addition and tensor product given by: For $X/G$ and $Y/H$ objects in $\lac_R$, 
\begin{displaymath}
\frac{X}{G}\oplus\frac{Y}{H}=\frac{(X\times H)\sqcup (G\times Y)}{G\times H}\quad\textrm{and}\quad \frac{X}{G}\otimes \frac{Y}{H}=\frac{X\times Y}{G\times H},
\end{displaymath}
where $X\times H$, $G\times Y$ and $X\times Y$ are seen as $(G\times H)$-sets through the projections on $G$ and $H$.
\end{teo}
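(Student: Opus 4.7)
The plan is to verify in turn the three structural claims—additivity, symmetric monoidality and self-duality—using the associativity, bilinearity and identity properties of composition established in the previous proposition.

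For additivity, the hom-sets are $R$-modules by the definition $RB^{H,G}(Y,X)=R\otimes_{\ent}B^{H\times G}(Y\times X)$, and composition is $R$-bilinear by the previous proposition, so the hom-sets and composition together define an enrichment in $R$-Mod. The object $\emptyset/1$ is a zero object: any span with an empty endpoint must have an empty middle, so all hom-sets to and from it vanish. To realize $\frac{(X\times H)\sqcup(G\times Y)}{G\times H}$ as the biproduct of $X/G$ and $Y/H$, I would exhibit inclusions $\iota_{X/G}$, $\iota_{Y/H}$ and projections $\pi_{X/G}$, $\pi_{Y/H}$ as classes of spans whose middle objects are $G\times X\times H$ (respectively $G\times H\times Y$), equipped with the natural actions making them equivariant and invariant on the appropriate sides, and then check by direct pullback that $\pi_i\iota_j=\delta_{ij}\cdot\mathrm{id}$ and $\iota_{X/G}\pi_{X/G}+\iota_{Y/H}\pi_{Y/H}=\mathrm{id}$.

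For the symmetric monoidal structure, the unit is $\{\bullet\}/1$ and the tensor on morphisms is defined componentwise, $\langle U,\beta,\alpha\rangle\otimes\langle U',\beta',\alpha'\rangle=\langle U\times U',\beta\times\beta',\alpha\times\alpha'\rangle$, with the product action. Functoriality of $\otimes$ reduces to the compatibility $(V\hat{\times}_HU)\times(V'\hat{\times}_{H'}U')\cong(V\times V')\hat{\times}_{H\times H'}(U\times U')$. The associator, unitor and symmetry are induced by the canonical set-level isomorphisms $X\times(Y\times Z)\cong(X\times Y)\times Z$, $\bullet\times X\cong X$, $X\times Y\cong Y\times X$ and the analogous isomorphisms for groups; the pentagon, triangle and hexagon axioms then reduce to their counterparts in $\mathrm{Set}$.

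For self-duality, define a contravariant functor $D:\lac_R\to\lac_R$ which is the identity on objects and sends a generating span $\langle U,\beta,\alpha\rangle\in RB^{H,G}(Y,X)$ to $\langle U^{\mathrm{sw}},\alpha,\beta\rangle\in RB^{G,H}(X,Y)$, where $U^{\mathrm{sw}}$ denotes $U$ with its $(H\times G)$-action transported along the swap $G\times H\cong H\times G$. This is an involution on objects, is well defined on equivalence classes of spans, and reverses composition because flipping $\alpha$ and $\beta$ exchanges the two sides of the defining condition $\gamma(v)=\beta(u)$ in the $\hat{\times}_H$ pullback, giving $D(\langle V,\delta,\gamma\rangle\circ\langle U,\beta,\alpha\rangle)=D\langle U,\beta,\alpha\rangle\circ D\langle V,\delta,\gamma\rangle$. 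The main obstacle will be the biproduct verification, since it requires unpacking the $\hat{\times}$ pullback for each of the four compositions $\pi_i\iota_j$ and producing explicit isomorphisms of $(G\times H)$-sets realizing the biproduct identities; once this is done, the monoidal and self-dual structures follow by essentially formal transport of set-theoretic identities through the definitions.
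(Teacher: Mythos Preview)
Your approach is correct and covers the same three claims, but the additivity argument differs from the paper's. You propose to verify the biproduct axioms directly: construct both injections $\iota_i$ and projections $\pi_i$ and check the five identities $\pi_i\iota_j=\delta_{ij}\,\mathrm{id}$ and $\sum_i\iota_i\pi_i=\mathrm{id}$. The paper instead only constructs the two injections and verifies the universal property of the \emph{coproduct}---given arbitrary $f:X/G\to Z/K$ and $t:Y/H\to Z/K$, it exhibits the mediating map $\varphi$ and checks $\varphi\circ\iota_{X/G}\simeq f$, $\varphi\circ\iota_{Y/H}\simeq t$---relying on the standard fact that in a pre-additive category finite coproducts are automatically biproducts. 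The paper's route is a little shorter (two composites to unwind rather than five), while yours is more symmetric and avoids invoking that general fact; both lead to the same explicit span constructions over $G\times X\times H$ and $G\times H\times Y$. Your treatments of the monoidal structure (in fact more explicit than the paper's, which defers to Lemma~\ref{isosi}) and of self-duality coincide with the paper's.
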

\begin{proof}
By the previous proposition, $\lac_R$ is a pre-additive category, we first prove that $\lac_R$ has finite coproducts.

For the zero-ary case, it is easy to see that $\emptyset/\{1\}$ is an initial object. Observe that for any group $G$ we have
\begin{displaymath}
\frac{\emptyset}{\{1\}}\cong \frac{\emptyset}{G}
\end{displaymath}
in $\lac_R$.

For the bi-coproducts, we begin by finding the  corresponding morphisms from the objects $X/G$ and $Y/H$ to \mbox{$((X\times H)\sqcup (G\times Y))/(G\times H)$}. For the one from $X/G$, that is the one in $RB^{G\times H,\, G}\big((X\times H)\sqcup (G\times Y),\, X\big)$, we take $\langle G\times H\times X,\,\underline{i_{X}},\overline{1_X}\rangle$, where $G\times H\times X$ is regarded as a $(G\times H\times G)$-set through the action 
\begin{displaymath}
(g_1,\, h_1,\, g_2)(g,\, h,\, x)=(g_1gg_2^{-1},\, h_1h,\, g_1x),
\end{displaymath}
with morphisms $\underline{i_X}(g,\,h,\,x)=(x,\, h)$ and $\overline{1_X}(g,\,h,\,x)=g^{-1}x$. It is easy to see that $\underline{i_X}$ and $\overline{1_X}$ are morphisms of $(G\times H\times G)$-sets. Similarly, for $Y/H$, we consider $\langle G\times H\times Y,\, \underline{i_Y},\, \overline{1_Y}\rangle$ in $RB^{G\times H,\, H}\big((X\times H)\sqcup (G\times Y),\, Y\big)$, where $G\times H\times Y$ has a $(G\times H\times H)$-action given by
\begin{displaymath}
(g_1,\, h_1,\, h_2)(g,\, h,\, y)=(g_1g,\, h_1hh_2^{-1},\, h_1y),
\end{displaymath}
and morphisms $\underline{i_Y}(g,\,h,\,y)=(g,\, y)$ and $\overline{1_Y}(g,\,h,\,y)=h^{-1}y$.

Now, suppose we have an object $Z/K$ and morphisms 
\begin{displaymath}
f:\frac{X}{G}\longrightarrow \frac{Z}{K}\quad \textrm{and}\quad t:\frac{Y}{H}\longrightarrow \frac{Z}{K}.
\end{displaymath}
 Since composition in $\lac_R$ is bilinear, we can suppose that $f$ is equal to a generating element $\langle U,\, \beta,\, \alpha\rangle$ in $RB^{K,\, G}(Z,\, X)$ and $t=\langle W,\, \eta,\, \varepsilon \rangle$ in $RB^{K,\, H}(Z,\, Y)$. Finding a morphism $\varphi$ in $RB^{K,\, G\times H}\big(Z,\,(X\times H)\sqcup (G\times Y)\big)$ such that
\begin{displaymath}
\varphi\circ\langle G\times H\times X,\,\underline{i_{X}},\overline{1_X}\rangle \backsimeq\langle U,\beta,\alpha\rangle\quad\textrm{and}\quad\varphi\circ\langle G\times H\times Y,\,\underline{i_{Y}},\overline{1_Y}\rangle \backsimeq\langle W,\eta,\,\varepsilon\rangle,
\end{displaymath}
will show that $((X\times H)\sqcup (G\times Y))/G\times H$ is indeed a coproduct. We define
\begin{displaymath}
 \varphi=\langle U\times H,\,\underline{\beta},\, \alpha^+\rangle +\langle G\times W,\, \underline{\eta},\, \varepsilon^+ \rangle
\end{displaymath}
where $U\times H$ and $G\times W$ are seen as $(K\times G\times H)$-sets with the actions 
\begin{displaymath}
(k,\,g,\, h_1)(u,\, h)=((k,\,g)u,\,hh_1^{-1})\quad \textrm{and}\quad
(k,\,g_1,\, h)(g,\, w)=(gg_1^{-1},\,(k,\,h)w)
\end{displaymath}
and we have 
\begin{displaymath}
\underline{\beta}(u,\, h)=\beta(u),\ \alpha^+(u,\,h)=(\alpha(u),\, h^{-1}),\ \underline{\eta}(g,\, w)=\eta(w) \textrm{ and } \varepsilon^+(g,\, w)=(g^{-1},\, \varepsilon(w)),
\end{displaymath}
for $(u,\, h)\in U\times H$ and $(g,\, w)\in G\times W$. It is easy to see that they are all morphisms of $(K\times G\times H)$-sets.

For $X/G$ we have that $\varphi\circ\langle G\times H\times X,\,\underline{i_{X}},\overline{1_X}\rangle$ is equal to
\begin{displaymath}
\langle (U\times H)\hat{\times}_{G\times H}(G\times H\times X),\, \widehat{\underline{\beta}},\, \widehat{\overline{1_X}}\rangle,
\end{displaymath}
since in $(X\times H)\sqcup (G\times Y)$ there are no elements satisfying $\varepsilon^+(g,\,w)=\underline{i_X}(g,\, h,\, x)$.

We have the diagram
\[
\xymatrix{
&&& T\ar@/_1pc/[dddlll]_{\widehat{\underline{\beta}}} \ar@/^1pc/[dddrrr]^{\widehat{\overline{1_X}}} &&\\
&&& P\ar[u]\ar[rd]\ar[ld] &&\\
&& U\times H\ar[rd]^{\alpha^+}\ar[lld]_{\underline{\beta}\!\!\!} & & G\times H\times X \ar[rrd]^{\!\!\!\!\overline{1_X}}\ar[ld]_{\underline{i_X}}& \\
Z &&& (X\times H)\sqcup (G\times Y) &&& X 
}
\]
where $T=(U\times H)\hat{\times}_{G\times H}(G\times H\times X)$.
An element here, $q=[(u,\,h),\, (g_1,\, h_1,\,x)]$ satisfies $(\alpha(u),\, h^{-1})=(x,\, h_1)$, so we can write $q=[(u,\,h),\, (g_1,\, h^{-1},\, \alpha (u))]$, which is in turn equal to $[(u,\, 1),\, (g_1,\,1,\, \alpha(u))]$.

We prove now that
\begin{displaymath}
\langle (U\times H)\hat{\times}_{G\times H}(G\times H\times X),\, \widehat{\underline{\beta}},\, \widehat{\overline{1_X}}\rangle\backsimeq\langle U,\beta,\alpha\rangle.
\end{displaymath}

We define $f:U\rightarrow (U\times H)\hat{\times}_{G\times H}(G\times H\times X)$ as $f(u)=[(u,\,1),(1,\,1,\ \alpha(u))]$. 
It is easy to see that $f$ is injective. To see that it is surjective, observe that
\begin{eqnarray*}
[(u,\, 1),\, (g_1,\,1,\, \alpha(u))]& =&[(u,\, 1),\, (g_1,\, 1)(1,\,1,\, g_1^{-1}\alpha(u))]\\
 &=& [(u,\, 1)(g_1,\,1),\, (1,\,1,\, g_1^{-1}\alpha(u))]\\
 &=& [((1,\,g_1^{-1})u,\, 1),\, (1,\,1,\, \alpha((1,\,g_1^{-1})u))],
\end{eqnarray*}
which is equal to $f((1,\,g_1^{-1})u)$.

Next we see that $f$ is a morphism of $(K\times G)$-sets. Recall that the action of $K\times G$ in $(U\times H)\hat{\times}_{G\times H}(G\times H\times X)$ is given through $K\times \{1\}$ in $U$ and through $\{1\}\times G$
 in $G\times H\times X$, and so $(k,\, g_1)f(u)$ is equal to
\begin{displaymath}
[((k,\,1)u,\, 1),\, (g_1^{-1},\,1,\, \alpha(u))].
\end{displaymath}
On the other hand, $f((k,\, g_1)u)$ is equal to
\begin{displaymath}
[((k,\, g_1)u,\, 1),\, (1,\, 1,\, \alpha((k,\, g_1)u)],
\end{displaymath}
which is equal to
\begin{eqnarray*}
[((k,\, g_1)u,\, 1),\, (1,\, 1,\, g_1\alpha(u))]& = & [((k,\, 1)u,\, 1)(g_1^{-1},\,1),\, (g_1,\, 1)(g_1^{-1},\, 1,\, \alpha(u)]\\
& = & [((k,\, 1)u,\, 1),\, (g_1^{-1},\, 1,\, \alpha(u)].
\end{eqnarray*}

Finally, it is easy to see that $\widehat{\underline{\beta}}f=\beta$ and $\widehat{\overline{1_X}}f=\alpha$.

Analogous arguments show the corresponding results for $Y/H$.

To see that it is a symmetric monoidal category, with unit element $\{\bullet\}/\{1\}$, it suffices to use the next lemma, and to identify
\begin{displaymath}
\left(\frac{X}{G}\otimes \frac{Y}{H}\right)\otimes \frac{Z}{K}\quad \textrm{and}\quad 
\frac{X}{G}\otimes \left(\frac{Y}{H}\otimes \frac{Z}{K}\right)
\end{displaymath}
with the object
\begin{displaymath}
\frac{X\times Y\times Z}{G\times H\times K},
\end{displaymath}
defined in the obvious way.

To see that $\lac_R$ is self-dual, let $F:\lac_R\to\lac_R^{op}$ be the functor defined as the identity on objects and that sends a generating morphism $\langle U,\beta,\alpha\rangle$ in $RB^{H,\, G}(Y,\, X)$ to
$\langle U^{op},\alpha,\beta\rangle$,
where $U^{op}$ is the opposite $(G\times H)$-set. It is straightforward to see that $F$ is an equivalence of categories.
\end{proof}

\begin{lema}
\label{isosi}
Let $G$ and $H$ be finite groups, $X$ a finite $G$-set and $Y$ be a finite $H$-set. If $G$ and $H$ are isomorphic groups, through an isomorphism $f:G\rightarrow H$, and  there exists a bijection $t:X\rightarrow Y$ that satisfies $t(gx)=f(g)t(x)$, then $X/G$ and $Y/H$ are isomorphic objects in $\lac_R$.
\end{lema}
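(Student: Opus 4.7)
The plan is to construct explicit morphisms between $X/G$ and $Y/H$ in $\lac_R$ and verify they are mutual inverses.

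First, I would define $\Phi := \langle H \times X,\, \beta,\, \alpha\rangle$ in $RB^{H, G}(Y, X)$, where $H \times X$ carries the $(H \times G)$-action
\[
(h_1, g)\cdot (h, x) = \bigl(h_1 h f(g)^{-1},\, g x\bigr),
\]
and the structure maps are $\beta(h, x) = h\, t(x)$ and $\alpha(h, x) = x$. The equivariance conditions demanded by the remark after Definition \ref{lacat} reduce to $h_1 h f(g)^{-1} t(gx) = h_1 h t(x)$ and $\alpha((h_1, g)(h, x)) = gx$; the first holds thanks to the hypothesis $t(gx) = f(g) t(x)$, and the second is immediate. The candidate inverse $\Psi \in RB^{G, H}(X, Y)$ is built symmetrically using $f^{-1}$ and $t^{-1}$: take $U' = G \times Y$ with action $(g_1, h)(g, y) = \bigl(g_1 g f^{-1}(h)^{-1},\, h y\bigr)$, and structure maps $\beta'(g, y) = g\, t^{-1}(y)$, $\alpha'(g, y) = y$, where the needed identity $t^{-1}(hy) = f^{-1}(h)t^{-1}(y)$ is a formal consequence of the hypothesis.

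The core task is then to compute $\Psi \circ \Phi$ and identify it with the identity morphism $\langle G \times X,\, \underline{1},\, \overline{1}\rangle$ of $X/G$. Applying the composition formula, the pullback condition $\alpha'(g, y) = \beta(h, x)$ forces $y = h\, t(x)$, and absorbing the $H$-action into the second factor of the tensor shows that every class in $U' \hat{\times}_H U$ has a unique representative of the form $[(g, t(x)),\, (1, x)]$, giving a bijection with $G \times X$. A direct computation of the transported $(G \times G)$-action yields $(g_1, g_2)\cdot(g, x) = (g_1 g g_2^{-1},\, g_2 x)$, which one identifies with the identity biset's action $(g_1, g_2)\cdot(g, x) = (g_1 g g_2^{-1},\, g_1 x)$ through the reparametrization $(g, x) \mapsto (g,\, g x)$; this same map intertwines the structure maps $(\hat{\beta'}, \hat{\alpha})$ of the composition with $(\underline{1}, \overline{1})$. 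The verification that $\Phi \circ \Psi$ is the identity of $Y/H$ is entirely analogous by symmetry.

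The main obstacle is the bookkeeping in the composition: tracking how the $(G \times G)$-action behaves on $U' \hat{\times}_H U$ after collapsing $H$-orbits, and spotting the change of coordinates that aligns the resulting biset with the canonical identity biset $G \times X$. Once that reparametrization is in hand, every remaining compatibility reduces mechanically to the definitions together with the hypothesis $t(gx) = f(g)t(x)$.
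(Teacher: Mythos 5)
Your proof is correct and follows essentially the same route as the paper: exhibit two explicit spans and check by a direct biset computation that their composite is the identity $\langle G\times X,\underline{1},\overline{1}\rangle$. Your span $\langle H\times X,\beta,\alpha\rangle$ is isomorphic (via $(h,x)\mapsto(h,ht(x))$) to the paper's $\langle H\times Y,\underline{1_Y},\overline{t^{-1}}\rangle$, and your normalization of classes to the form $[(g,t(x)),(1,x)]$ together with the reparametrization $(g,x)\mapsto(g,gx)$ matches the paper's identification step.
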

\begin{proof}
Take $\langle G\times X,\, \underline{1_X},\, \overline{t}\rangle$ in $RB^{G,\, H}(X,\, Y)$ and $\langle H\times Y,\, \underline{1_Y},\, \overline{t^{-1}}\rangle$ in $RB^{H,\, G}(Y,\, X)$, where $G\times X$ is a $(G\times H)$-set and $H\times Y$ is an $(H\times G)$-set through the following actions
\begin{displaymath}
(g,\, h)(g_1,\, x)=(gg_1f^{-1}(h^{-1}),\, gx)\quad\textrm{and}\quad(h,\, g)(h_1,\, y)=(hh_1f(g^{-1}),\, hy),
\end{displaymath}
and we have
\begin{displaymath}
\underline{1_X}(g,\, x)=x,\quad \underline{1_Y}(h,\, y)=y,\quad \overline{t}(g,\, x)=t(g^{-1}x) \ \textrm{ and }\ \overline{t^{-1}}(h,\, y)=t^{-1}(h^{-1}y).
\end{displaymath}
We verify only that $\langle G\times X,\, \underline{1_X},\, \overline{t}\rangle\circ\langle H\times Y,\, \underline{1_Y},\, \overline{t^{-1}}\rangle$ is equal to $\langle G\times X,\, \underline{1},\, \overline{1}\rangle$, since to prove that the other composition gives the identity is analogous.
The composition $\langle G\times X,\, \underline{1_X},\, \overline{t}\rangle\circ\langle H\times Y,\, \underline{1_Y},\, \overline{t^{-1}}\rangle$ is equal to
$\langle (G\times X)\hat{\times}_H(H\times Y),\, \widehat{\underline{1_X}},\, \widehat{\overline{t^{-1}}} \rangle$, with
\begin{displaymath}
(G\times X)\hat{\times}_H(H\times Y)=\{[(g,\,x),\,(h,\, y)]\mid f(g^{-1})t(x)=y\}
\end{displaymath}
and
\begin{displaymath}
\widehat{\underline{1_X}}([(g,\,x),\,(h,\, y)])=x\quad \textrm{and}\quad\widehat{\overline{t^{-1}}}([(g,\,x),\,(h,\, y)])=f^{-1}(h^{-1})t^{-1}(y).
\end{displaymath}
It is easy to observe that an element $[(g,\,x),\,(h,\, y)]$ in $(G\times X)\hat{\times}_H(H\times Y)$ is equal to
\begin{displaymath}
[(gf^{-1}(h),\, x),\, (1,\, (h^{-1}f(g^{-1})t(x))],
\end{displaymath}
so we can define $\alpha:(G\times X)\hat{\times}_H(H\times Y)\rightarrow G\times X$ as $\alpha([(g,\,x),\,(h,\, y)])=(gf^{-1}(h),\, x)$. It is not hard to see that $\alpha$ is a well defined surjective morphism of $(G\times G)$-sets. To see that $\alpha$ is injective, suppose that $[(g_1,\,x_1),\,(h_1,\, y_1)]$ and $[(g_2,\,x_2),\,(h_2,\, y_2)]$ are two elements in $(G\times X)\hat{\times}_H(H\times Y)$ such that $(g_1f^{-1}(h_1),\,x_1)=(g_2f^{-1}(h_2),\,x_2)$. This clearly implies
\begin{displaymath}
[(g_1f^{-1}(h_1),\, x_1),\, (1,\, (h_1^{-1}f(g_1^{-1})t(x_1))]=[(g_2f^{-1}(h_2),\, x_2),\, (1,\, (h_2^{-1}f(g_2^{-1})t(x_2))].
\end{displaymath}
Finally, we clearly have $\underline{1}\alpha=\widehat{\underline{1_X}}$ and $\overline{1}\alpha=\widehat{\overline{t^{-1}}}$.
\end{proof}

\section{Some properties of $\lac_R$}

We begin by identifying some isomorphic objects in $\lac_R$, and we finish the section by showing that $\lac_R$ is the additive completion of the biset category.

\begin{lema}
\label{isosvarios}
Let $G$ and $H$ be finite groups and $X$ and $X'$ be finite $G$-sets. In $\lac_R$ we have
\begin{displaymath}
\textrm{a)}\ \frac{X}{G}\cong \frac{X\times H}{G\times H}\quad \textrm{and}\quad
\textrm{b)}\ \frac{X}{G}\oplus \frac{X'}{G}\cong \frac{X\sqcup X'}{G}.
\end{displaymath}
\end{lema}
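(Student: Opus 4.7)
The plan is to derive both parts from the additive structure of $\lac_R$ established in the previous theorem.

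For part~(a), the key observation is that $\frac{\emptyset}{H}$ is a zero object in $\lac_R$. Indeed, the previous theorem noted that $\frac{\emptyset}{H}\cong\frac{\emptyset}{\{1\}}$ for every finite group $H$, and $\frac{\emptyset}{\{1\}}$ was shown there to be an initial object; since $\lac_R$ is additive, the initial object coincides with the terminal object and is therefore a zero object. Applying the bi-coproduct formula with $Y=\emptyset$ and recalling that $G\times\emptyset=\emptyset$ then gives
\begin{displaymath}
\frac{X}{G}\cong\frac{X}{G}\oplus\frac{\emptyset}{H}=\frac{(X\times H)\sqcup (G\times\emptyset)}{G\times H}=\frac{X\times H}{G\times H},
\end{displaymath}
which is the claimed isomorphism.

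For part~(b), the plan is to exhibit $\frac{X\sqcup X'}{G}$ as a concrete biproduct of $\frac{X}{G}$ and $\frac{X'}{G}$ in $\lac_R$ by writing down explicit inclusion and projection morphisms, and then invoking the uniqueness of biproducts in an additive category. Take as inclusion $\iota_X\in RB^{G,G}(X\sqcup X',\,X)$ the morphism $\langle G\times X,\,\underline{i_X},\,\overline{1_X}\rangle$, where $G\times X$ carries the $(G\times G)$-biset structure used to define the identity morphism of $X/G$, the map $\underline{i_X}(g,x)$ is the canonical injection $i_X(x)\in X\sqcup X'$, and $\overline{1_X}(g,x)=g^{-1}x\in X$. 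Define $\iota_{X'}$ symmetrically. For projections, take $\pi_X\in RB^{G,G}(X,\,X\sqcup X')$ as $\langle G\times X,\,\underline{1_X},\,\overline{i_X}\rangle$ with $\underline{1_X}(g,x)=x$ and $\overline{i_X}(g,x)=i_X(g^{-1}x)$, and similarly $\pi_{X'}$.

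Then the plan is to verify the biproduct relations. The compositions $\pi_X\circ\iota_X$ and $\pi_{X'}\circ\iota_{X'}$ reduce to the identity on $\frac{X}{G}$ and $\frac{X'}{G}$ respectively by the same pullback calculation used to identify the identity morphism in the previous proposition. The cross compositions $\pi_X\circ\iota_{X'}$ and $\pi_{X'}\circ\iota_X$ vanish because the pullback condition forces elements to land in $i_X(X)\cap i_{X'}(X')=\emptyset$ inside $X\sqcup X'$. Finally, the sum $\iota_X\circ\pi_X+\iota_{X'}\circ\pi_{X'}$ assembles into a morphism whose underlying biset is $(G\times X)\sqcup(G\times X')\cong G\times(X\sqcup X')$, on which both structural maps agree with those of the identity morphism of $\frac{X\sqcup X'}{G}$. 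Since biproducts in an additive category are unique up to canonical isomorphism, this yields $\frac{X\sqcup X'}{G}\cong\frac{X}{G}\oplus\frac{X'}{G}$.

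The main obstacle in both parts is pure bookkeeping: tracking the various $(G\times G)$-biset structures and checking the equivariance and compatibility of the defining maps. The conceptual content is clean: part~(a) is the fact that direct sum with a zero object is trivial, while part~(b) realises the categorical coproduct of $\frac{X}{G}$ and $\frac{X'}{G}$ inside $\frac{X\sqcup X'}{G}$ via an explicit inclusion/projection pair.
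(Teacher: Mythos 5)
Your proof is correct. Part (a) is essentially the paper's argument: both identify $\emptyset/H$ with the zero object $\emptyset/\{1\}$ and then read off the isomorphism from the explicit formula for $\oplus$. Part (b), however, takes a genuinely different route. The paper computes $\frac{X}{G}\oplus\frac{X'}{G}=\frac{(X\times G)\sqcup(G\times X')}{G\times G}$, invokes Lemma \ref{isosi} (transport of structure along a group isomorphism, here essentially the swap $G\times X'\cong X'\times G$ over the flip of $G\times G$) to rewrite this as $\frac{(X\sqcup X')\times G}{G\times G}$, and then applies part (a); the whole argument stays at the level of objects and reuses previously proved isomorphisms. You instead verify directly that $\frac{X\sqcup X'}{G}$ \emph{is} a biproduct of $\frac{X}{G}$ and $\frac{X'}{G}$, by exhibiting explicit inclusion and projection spans and checking the relations $\pi_X\iota_X=\mathrm{id}$, $\pi_{X'}\iota_X=0$ (the pullback is empty since $i_X(X)\cap i_{X'}(X')=\emptyset$), and $\iota_X\pi_X+\iota_{X'}\pi_{X'}=\mathrm{id}$ (the disjoint union of the two summands is $(G\times X)\sqcup(G\times X')\cong G\times(X\sqcup X')$ with the structural maps of the identity); uniqueness of biproducts in a preadditive category then gives the isomorphism. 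I checked the equivariance of your maps and the three composition computations and they all go through. What your approach buys is independence from Lemma \ref{isosi} and from part (a), together with explicit canonical inclusion/projection morphisms realizing the decomposition of Remark \ref{deco}; what it costs is more bookkeeping than the paper's two-line reduction. Both are valid.
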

\begin{proof}
The proof of a) comes from the fact that $\emptyset/H$ is isomorphic to $\emptyset/\{1\}$ and so
\begin{displaymath}
\frac{X}{G}\cong \frac{X}{G}\oplus \frac{\emptyset}{H}=\frac{X\times H}{G\times H}.
\end{displaymath}

To prove b), observe that by Lemma \ref{isosi}, we have
\begin{displaymath}
\frac{X}{G}\oplus \frac{X'}{G}=\frac{(X\times G)\sqcup (G\times X')}{G\times G}\cong \frac{(X\sqcup X')\times G}{G\times G},
\end{displaymath}
which, by the previous point, is isomorphic to $(X\sqcup X')/G$.
\end{proof}

\begin{lema}
\label{punto}
If $H\leqslant G$, consider $G/H$, the set of left cosets of $H$ in $G$.  Then in $\lac_R$,
\begin{displaymath}
\frac{G/H}{G}\cong \frac{\{\bullet\}}{H}.
\end{displaymath}
\end{lema}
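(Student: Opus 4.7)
The plan is to construct mutually inverse isomorphisms $\varphi\colon \frac{G/H}{G}\to\frac{\{\bullet\}}{H}$ and $\psi\colon \frac{\{\bullet\}}{H}\to\frac{G/H}{G}$, each represented by a span whose middle object is the group $G$ itself carrying its standard biset structure. Explicitly, set $\varphi=\langle G,\,!,\,\alpha\rangle\in RB^{H,G}(\{\bullet\},G/H)$, where $G$ is viewed as an $(H\times G)$-set via $(h,g)\cdot u=hug^{-1}$, the map $!$ is constant, and $\alpha(u)=u^{-1}H$; and $\psi=\langle G,\,\delta,\,!\rangle\in RB^{G,H}(G/H,\{\bullet\})$, where $G$ is the $(G\times H)$-set via $(g,h)\cdot v=gvh^{-1}$ and $\delta(v)=vH$. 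Since $H\leq G$, the equivariance conditions from the Remark following Definition \ref{lacat} follow: for instance $\alpha(hug^{-1})=gu^{-1}h^{-1}H=gu^{-1}H=g\alpha(u)$ because $h^{-1}\in H$, and similarly $\delta(gvh^{-1})=gvh^{-1}H=gvH=g\delta(v)$.

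To compute $\psi\circ\varphi\in RB^{G,G}(G/H,G/H)$, observe that the pullback condition $\gamma(v)=\beta(u)$ is automatic since both maps land in $\{\bullet\}$, so the middle object is the ordinary biset tensor $G\times_H G$. I would then introduce the bijection
\[
\Psi\colon G\times_H G \longrightarrow G\times (G/H),\qquad [v,u]\longmapsto (vu,\,vH),
\]
with inverse $(g,xH)\longmapsto [x,\,x^{-1}g]$ (well-defined modulo the choice of coset representative, since $[xh,\,h^{-1}x^{-1}g]=[x,\,x^{-1}g]$ in $G\times_H G$), and verify that $\Psi$ intertwines the $(G\times G)$-action $(a,b)\cdot[v,u]=[av,ub^{-1}]$ with the identity span's action $(a,b)(g,x)=(agb^{-1},ax)$, and sends $\hat{\delta}([v,u])=vH$ and $\hat{\alpha}([v,u])=u^{-1}H$ to $\underline{1}(g,x)=x$ and $\overline{1}(g,x)=g^{-1}x$ respectively. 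This exhibits $\psi\circ\varphi$ as the identity of $\frac{G/H}{G}$.

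For $\varphi\circ\psi\in RB^{H,H}(\{\bullet\},\{\bullet\})$, the ambient tensor $G\times_G G$ collapses to $G$ via $[v,u]\mapsto vu$ (since $(vg,u)\sim(v,gu)$ makes the product $vu$ the unique invariant), and the pullback condition $v^{-1}H=uH$ becomes $vu\in H$. The composition span is therefore the subset $H\subset G$ with its standard $(H,H)$-bimultiplication, which is exactly the identity span of $\frac{\{\bullet\}}{H}$; the two trivial maps to $\{\bullet\}$ match automatically.

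The main obstacle is notational: the convention $(h,g)x=hxg^{-1}$ of the preliminaries forces the inversion in $\alpha(u)=u^{-1}H$ (so that $\alpha$ becomes $G$-equivariant in the sense of the Remark), and one must carefully track how the ordinary tensor product over the middle group interacts with the additional pullback condition defining $\hat{\times}_H$. Once the formula $\Psi([v,u])=(vu,vH)$ is written down, verifying its bijectivity, equivariance, and compatibility with the structure maps is routine.
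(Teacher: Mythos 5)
Your proposal is correct and follows essentially the same route as the paper: the same two spans $\{\bullet\}\leftarrow G\rightarrow G/H$ (with $g\mapsto g^{-1}H$) and $G/H\leftarrow G\rightarrow\{\bullet\}$ (with $g\mapsto gH$), and the same bijection $[v,u]\mapsto(vu,\,vH)$ onto the identity span $G\times(G/H)$. The only difference is that you also write out the composition landing in $\{\bullet\}/H$, which the paper dismisses as straightforward.
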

\begin{proof}
Define a morphism from $(G/H)/G$ to $\{\bullet\}/H$ as $\langle G,\, e, \pi_1\rangle$, where $G$ is seen as a $(H\times G)$-set through the action $(h,\, g_1)g=hgg_1^{-1}$, the morphism $e:G\to \{\bullet\}$ is the obvious one and $\pi_{1}:G\to G/H$ maps $g$ to $g^{-1}H$. They clearly are morphisms of $(H\times G)$-sets. Its inverse will be the morphism $\langle G,\, \pi,\, e\rangle$, where $G$ is a $(G\times H)$-set with action $(g_1,\,h)g=g_1gh^{-1}$, $\pi$ is the projection map on $H$ and $e$ is defined as before.

We will only verify that $\langle G,\, \pi,\, e \rangle\circ \langle G,\, e, \pi_{1}\rangle\backsimeq \langle G\times (G/H),\,\underline{1},\, \overline{1}\rangle$, the identity morphism for $(G/H)/G$, since to prove that the other composition is equivalent to the identity morphism for $\{\bullet\}/H$ is straightforward.

We have the following diagram
\[
\xymatrix{
&& G\hat{\times}_HG\ar@/_1pc/[dddll]_{\hat\pi} \ar@/^1pc/[dddrr]^{\hat{\pi}_{1}} &&\\
&& W\ar[u]\ar[rd]\ar[ld] &&\\
& G\ar[rd]^{e}\ar[ld]_{\pi\!\!\!} & & G \ar[rd]^{\!\!\!\!\pi_{1}}\ar[ld]_{e}& \\
G/H && \{\bullet\} && G/H
}
\]
Observe that in this case $G\hat{\times}_HG=G\times_HG$.

To see that this composition is equivalent to the identity, let $f:G\times_HG\to G\times (G/H)$ be defined by $f([a,\,b])=(ab,\,aH)$. We prove that $f$ is an isomorphism of $(G\times G)$-sets such that $\overline{1} f=\hat{\pi}_{1}$ and $\underline{1} f=\hat{\pi}$. It is clear that $f$ is well defined. To see that it is a morphism of $(G\times G)$-sets, observe that
\begin{displaymath}
 f((r,\,s)[a,\,b])=f([ra,\,bs^{-1}])=(rabs^{-1},raH)=(r,\,s)f([a,b]).
\end{displaymath}
It is easy to see that the inverse of $f$ is the morphism $t:G\times (G/H)\to G\times_HG$ defined by $t(c,\, dH)=[d,\, d^{-1}c]$. Finally, we have that
\begin{displaymath}
\overline{1}f([a,b])=b^{-1}a^{-1}aH=b^{-1}H=\hat{\pi}_1([a,b])
\end{displaymath}
and that $\underline{1}f([a,b])=aH=\hat{\pi}([a,b])$. This finishes the proof.
\end{proof}

Some consequences of this lemma are the following.

\begin{rem}
\label{hom}
Any of both, point a) of Lemma \ref{isosvarios} or the previous lemma, implies that for any group $G$, we have 
\begin{displaymath}
\frac{G}{G}\cong \frac{\{\bullet\}}{\{1\}}.
\end{displaymath}

Also, observe that if $X$ is a transitive $G$-set, isomorphic to a set of left cosets $G/H$, then the previous lemma implies that the Burnside group of $X$, as defined in Section 1, is isomorphic to
\begin{displaymath}
B^{G,\, \{1\}}(X,\, \{\bullet\})\cong B^{H,\, \{1\}}(\{\bullet\},\, \{\bullet\})\cong B(H).
\end{displaymath}
\end{rem}

\begin{rem}
\label{deco}
Point b) of Lemma \ref{isosvarios} and the previous lemma imply that any $X/G$ in $\lac_R$ can be decomposed as
\begin{displaymath}
\frac{X}{G}\cong \bigoplus_{i=1}^n\frac{\{\bullet\}}{G_i},
\end{displaymath}
if $X$ decomposes in $G$-orbits as $X=\bigsqcup_{i=1}^nO_i$ and the stabilizer of an element in $O_i$ is $G_i$. 
\end{rem}

To prove the following theorem we will use the functor we define next.

\begin{defi}
Let $X/G$ be an element in $\lac_R$ and $K$ be a finite group. We define
\begin{displaymath}
\overline{RB}^{G,\, K}(X,\, \{\bullet\})=\overline{Hom}_{\lac_R}\left(\frac{\{\bullet\}}{K},\, \frac{X}{G}\right)=Hom_{\lac_R}\left(\frac{\{\bullet\}}{K},\, \frac{X}{G}\right)/\mathcal{F},
\end{displaymath}
where $\mathcal{F}$ is the subgroup generated by elements of the form $u\circ v$, with
\begin{displaymath}
 u\in Hom_{\lac_R}\left(\frac{\{\bullet\}}{K},\,\frac{\{\bullet\}}{H}\right),\ v\in Hom_{\lac_R}\left(\frac{\{\bullet\}}{H},\, \frac{X}{G}\right)
\end{displaymath} 
and $H$ runs through groups such that  $|H|<|K|$.
\end{defi}

As before, observe that 
\begin{displaymath}
Hom_{\lac_R}\left(\frac{\{\bullet\}}{K},\,\frac{\{\bullet\}}{H}\right)=RB^{H,\, K}(\{\bullet\},\,\{\bullet\})
\end{displaymath}
is isomorphic to the Burnside group $RB(H,\, K)$.

It is not hard to verify that for $K$ a given group, $\overline{RB}^{\,\_\,,\, K}(\,\_\,,\, \{\bullet\})$
defines an additive functor from $\lac_R$ to the category of abelian groups. 

\begin{lema}
Let $X/G$ be an element in $\lac_R$ and $K$ be a finite group. Then 
\begin{displaymath}
\overline{RB}^{G,\,K}(X,\,\{\bullet\})\neq 0
\end{displaymath}
if and only if there exists $x\in X$ such that $K$ is a subquotient of  $Stab_G(x)$.
\end{lema}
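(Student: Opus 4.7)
The plan is to reduce the question to a single-orbit case and then analyze transitive $(G_i,K)$-bisets via their Goursat ``middle group''. First, decompose $X=\bigsqcup_{i=1}^n O_i$ into $G$-orbits with stabilizers $G_i$. By Remark \ref{deco}, $X/G\cong \bigoplus_{i=1}^n \{\bullet\}/G_i$ in $\lac_R$. Since $Hom_{\lac_R}(\{\bullet\}/K,-)$ is additive and compositions through an intermediate $\{\bullet\}/H$ distribute over direct sums in the target, the subgroup $\mathcal{F}$ respects this decomposition and
\begin{displaymath}
\overline{RB}^{G,K}(X,\{\bullet\})\cong\bigoplus_{i=1}^n\overline{RB}^{G_i,K}(\{\bullet\},\{\bullet\}).
\end{displaymath}
Hence it suffices to prove the equivalence for a single stabilizer: $\overline{RB}^{G_i,K}(\{\bullet\},\{\bullet\})\neq 0$ if and only if $K$ is a subquotient of $G_i$.

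Now $RB^{G_i,K}(\{\bullet\},\{\bullet\})=RB(G_i\times K)$ has an $R$-basis of transitive $(G_i\times K)$-sets $(G_i\times K)/L$, one for each conjugacy class of subgroups $L\leq G_i\times K$. By Goursat's lemma, such an $L$ is given by a quintuple $(A,B,C,D,\phi)$ with $B\trianglelefteq A\leq G_i$, $D\trianglelefteq C\leq K$ and an isomorphism $\phi\colon A/B\xrightarrow{\sim} C/D$; set $M(L):=A/B\cong C/D$, the \emph{middle group} of $L$. The standard factorization of a transitive biset as $\mathrm{Ind}\circ\mathrm{Inf}\circ\mathrm{Iso}\circ\mathrm{Def}\circ\mathrm{Res}$ then factors $(G_i\times K)/L$, as an arrow $\{\bullet\}/K\to\{\bullet\}/G_i$ in $\lac_R$, through $\{\bullet\}/M(L)$.

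The crux is the following middle-group claim: for $u\colon\{\bullet\}/K\to\{\bullet\}/H$ and $v\colon\{\bullet\}/H\to\{\bullet\}/G_i$, every transitive summand of $v\circ u$ has middle group a subquotient of $H$. By bilinearity one may take $u=(H\times K)/L_2$ and $v=(G_i\times H)/L_1$ transitive; since the pullback condition in Definition \ref{lacat} is vacuous here (source and target are points), the composition reduces to the usual biset product $V\times_H U$. A Goursat analysis of the $(G_i\times K)$-orbits of this product identifies each orbit's middle group as a subquotient of both $M(L_1)$ and $M(L_2)$, hence of $H$. In particular, if $|H|<|K|$ then every transitive summand of $v\circ u$ has middle group of order strictly less than $|K|$, so $\mathcal{F}$ is contained in the $R$-span of the basis elements $(G_i\times K)/L$ with $|M(L)|<|K|$. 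The converse inclusion is immediate: any such basis element is itself a composition through $\{\bullet\}/M(L)$ and so lies in $\mathcal{F}$.

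Filtering $RB(G_i\times K)$ by $|M(L)|$ we conclude that $\overline{RB}^{G_i,K}(\{\bullet\},\{\bullet\})$ is $R$-free on the classes of those $(G_i\times K)/L$ with $|M(L)|=|K|$. Such an $L$ exists if and only if one can take $C=K$ and $D=1$ with $A/B\cong K$, which by Goursat is exactly the statement that $K$ is a subquotient of $G_i$. The main obstacle I anticipate is the middle-group claim for $v\circ u$: it requires a careful Goursat calculation inside $V\times_H U$. Once it is in hand, both implications of the statement follow at once.
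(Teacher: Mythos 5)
Your proposal is correct and follows essentially the same route as the paper: reduce to the single-orbit case via Remark \ref{deco} and the additivity of $\overline{RB}^{\,\_\,,K}(\,\_\,,\{\bullet\})$, then characterize the transitive $(G_i\times K)$-bisets that survive modulo factorizations through smaller groups as exactly those with middle group isomorphic to $K$. The only difference is that the paper delegates the middle-group/filtration analysis to the butterfly decomposition (Lemma 2.3.26 in Bouc's book), whereas you sketch that Goursat argument explicitly.
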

\begin{proof}
By Remark \ref{deco}, $\overline{RB}^{G,\,K}(X,\,\{\bullet\})\neq 0$ if and only if, there exists $x\in X$ such that for $G_x=Stab_G(x)$, we have 
\begin{displaymath}
\overline{RB}^{G_x,\,K}(\{\bullet\},\,\{\bullet\})\neq 0.
\end{displaymath}
This happens if and only if there exists $U$, a transitive $(G_x,\, K)$-biset, which does not factor, with the composition of bisets, through groups of order smaller than $|K|$. Then, the butterfly decomposition for bisets, Lemma 2.3.26 in \cite{biset}, gives that we have this if and only if $K$ is  a subquotient of $G_x$.
\end{proof}

\begin{teo}
\label{iso}
Let $G$ and $H$ be finite groups, $X$ be a finite $G$-set and $Y$ be a finite $H$-set. Suppose that as a $G$-set, $X$ decomposes in orbits as $\bigsqcup_{i=1}^nO_i$, with the stabilizer of an element in $O_i$ being $G_i$, and as an $H$-set, $Y$ decomposes in orbits as $\bigsqcup_{j=1}^mT_j$, with the stabilizer of an element in $T_j$ being $H_j$. If $X/G$ is isomorphic to $Y/H$ in $\lac_R$, then $n=m$ and there exists $\sigma\in S_n$ such that $G_i\cong H_{\sigma(i)}$ as groups.
\end{teo}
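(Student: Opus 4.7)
The plan is first to invoke Remark \ref{deco} to write $X/G \cong \bigoplus_{i=1}^n \{\bullet\}/G_i$ and $Y/H \cong \bigoplus_{j=1}^m \{\bullet\}/H_j$ in $\lac_R$, reducing the theorem to showing that the multisets of isomorphism classes $\{[G_i]\}$ and $\{[H_j]\}$ coincide. Enumerate the iso classes of groups appearing in either decomposition as $[K_1], [K_2], \ldots$, ordered so that $|K_1| \geq |K_2| \geq \cdots$, and let $a_t$ (resp.\ $b_t$) denote the multiplicity of $[K_t]$ among the $[G_i]$ (resp.\ the $[H_j]$). The goal becomes $a_t = b_t$ for each $t$.

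To extract scalar invariants I would apply, for each $s$, the additive functor $\overline{RB}^{-, K_s}(-, \{\bullet\})$ from $\lac_R$ to $R$-modules. By additivity together with the hypothesis $X/G \cong Y/H$,
\[
\bigoplus_t \overline{RB}^{K_t, K_s}(\{\bullet\}, \{\bullet\})^{\,a_t} \;\cong\; \bigoplus_t \overline{RB}^{K_t, K_s}(\{\bullet\}, \{\bullet\})^{\,b_t}.
\]
The previous lemma shows that the summand indexed by $t$ vanishes unless $K_s$ is a subquotient of $K_t$, which in the chosen ordering forces $t \leq s$. Choosing a maximal ideal $\mathfrak m$ of $R$, setting
\[
d_{ts} := \dim_{R/\mathfrak m}\bigl(\overline{RB}^{K_t, K_s}(\{\bullet\},\{\bullet\}) \otimes_R R/\mathfrak m\bigr),
\]
and comparing dimensions produces, for each $s$, the identity
\[
\sum_{t \leq s} d_{ts}\,(a_t - b_t) = 0.
\]
This is a triangular system in the unknowns $a_t - b_t$ whose only solution is trivial as soon as every diagonal coefficient $d_{ss}$ is positive; the conclusion $a_t = b_t$ then follows by induction on $s$.

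The main obstacle is establishing that $d_{ss} > 0$, that is, that $\overline{RB}^{K_s, K_s}(\{\bullet\}, \{\bullet\})$ is nonzero after extending scalars to the residue field. I would derive this from the butterfly decomposition of bisets (Lemma~2.3.26 of \cite{biset}): a transitive $(K_s, K_s)$-biset fails to factor through a group of order strictly less than $|K_s|$ precisely when its stabilizer is a twisted diagonal in $K_s \times K_s$, and the $(K_s \times K_s)$-conjugacy classes of twisted diagonals are in bijection with $\mathrm{Out}(K_s)$. These span a $\mathbb Z$-direct summand of $B(K_s, K_s)$ complementary to the integral counterpart of $\mathcal F$, so after extending scalars $\overline{RB}^{K_s, K_s}(\{\bullet\}, \{\bullet\})$ becomes a free $R$-module of rank $|\mathrm{Out}(K_s)| \geq 1$, forcing $d_{ss} \geq 1$ and closing the argument.
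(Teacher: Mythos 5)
Your proposal is correct, and although it rests on the same two ingredients as the paper's proof --- the decomposition of Remark \ref{deco} and the functor $\overline{RB}^{\,\_\,,\,K}(\,\_\,,\{\bullet\})$ together with the nonvanishing lemma preceding the theorem --- the way you finish is genuinely different. The paper argues greedily: it chooses $K$ of maximal order with $\overline{RB}^{G,K}(X,\{\bullet\})\neq 0$, identifies $K$ with some $G_i$ and some $H_j$, cancels the summand $\{\bullet\}/K$ from both sides by comparing ranks of the free modules $RB(K,L)\oplus RB^{G,L}(X',\{\bullet\})$, and then inducts on the number of orbits. You instead recover all multiplicities simultaneously from a triangular linear system over a residue field $R/\mathfrak m$. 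The price of your route is that mere nonvanishing of $\overline{RB}^{K_s,K_s}(\{\bullet\},\{\bullet\})$ --- which is all the paper's lemma gives --- would not suffice, since a nonzero $R$-module can vanish after $\otimes_R R/\mathfrak m$; you correctly identify this and supply the missing input, namely that $\mathcal F$ is exactly the span of the transitive $(K_s,K_s)$-bisets whose stabilizer is not a twisted diagonal, so the quotient is free of rank $|\mathrm{Out}(K_s)|\geq 1$. That computation is standard (it is the usual description of the ``essential'' part of the double Burnside module) but does not appear in the paper, so it is a genuine extra step on your side; in exchange, your argument avoids the paper's somewhat delicate passage from an abstract isomorphism of the Hom-modules to an isomorphism of the quotients $\overline{RB}$ after cancelling the top summand. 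Two small caveats: you rely, as the paper does, on the additivity of $\overline{RB}^{\,\_\,,\,K}(\,\_\,,\{\bullet\})$, which the paper asserts without proof; and both arguments implicitly require $R\neq 0$, without which the statement itself fails.
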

\begin{proof}
Notice that if $X$ or $Y$ is the empty set, the result is trivial, so we can suppose that $X$ is not empty. 

Since $X/G$ and $Y/H$ are isomorphic, for any group $L$ we have
\begin{displaymath}
\overline{RB}^{G,\, L}(X,\, \{\bullet\})\cong \overline{RB}^{H,\, L}(Y,\, \{\bullet\}).
\end{displaymath}
We observe next, using Remark \ref{deco}, that $\overline{RB}^{G,\, G_i}(X,\, \{\bullet\})\neq 0$ for any $G_i$, because 
\begin{displaymath}
\overline{RB}^{G_i,\, G_i}(\{\bullet\},\, \{\bullet\})\neq 0,
\end{displaymath}
for it contains the $(G_i,\,G_i)$-biset $G_i$. Hence we can take $K$ of maximal order such that $\overline{RB}^{G,\, K}(X,\, \{\bullet\})\neq 0$. Clearly $K$ is also maximal with the property $\overline{RB}^{H,\, K}(Y,\, \{\bullet\})\neq 0$. Then there are $i$ and $j$ such that $K$ is isomorphic to a subquotient of $G_i$ and isomorphic to a subquotient of $H_j$. We show that $G_i\cong K\cong H_j$. If $K\ncong G_i$, then $G_i$ has order larger than $K$ and $\overline{RB}^{G,\, G_i}(X,\, \{\bullet\})\neq 0$, a contradiction. The same argument shows $K\cong H_j$. Then we have 
\begin{displaymath}
\frac{\{\bullet\}}{K}\oplus \frac{X'}{G}\cong \frac{\{\bullet\}}{K}\oplus \frac{Y'}{H}
\end{displaymath}
where $X'=X\setminus O_i$, with $O_i$ the orbit corresponding to $G_i$  and $Y'=Y\setminus T_j$, with $T_j$ the orbit corresponding to $H_j$. This isomorphism implies that for any group $L$ we have
\begin{displaymath}
Hom_{\lac_R}\left(\frac{\{\bullet\}}{L},\,\frac{\{\bullet\}}{K}\oplus \frac{X'}{G}\right)\cong Hom_{\lac_R}\left(\frac{\{\bullet\}}{L},\,\frac{\{\bullet\}}{K}\oplus \frac{Y'}{H}\right),
\end{displaymath}
which in turn implies
\begin{displaymath}
RB(K,\, L)\oplus RB^{G,\,L}(X',\,\{\bullet\})\cong RB(K,\, L)\oplus RB^{H,\,L}(Y',\,\{\bullet\}).
\end{displaymath}
Hence, given that in both sides of this isomorphism we have free $R$-modules of finite rank, we have $RB^{G,\,L}(X',\,\{\bullet\})\cong RB^{H,\,L}(Y',\,\{\bullet\})$, and so
\begin{displaymath}
\overline{RB}^{G,\, L}(X',\, \{\bullet\})\cong \overline{RB}^{H,\, L}(Y',\, \{\bullet\}).
\end{displaymath}
From this point forward, the argument we used above works exactly the same for $X'$, $Y'$ and the corresponding (remaining) stabilizers $G_i$ and $H_j$. Proceeding by induction on the number of orbits, we obtain the result.
\end{proof}

\begin{coro}
\label{comoKSR}
If $G_i$, for $1\leq i\leq n$, and $H_j$, for $1\leq j\leq m$, are groups such that
\begin{displaymath}
\bigoplus_{i=1}^n\frac{\{\bullet\}}{G_i}\cong \bigoplus_{j=1}^m\frac{\{\bullet\}}{H_j}
\end{displaymath}
in $\lac_R$, then $m=n$ \textcolor{black}{and} there exists $\sigma\in S_n$ such that $G_i\cong H_{\sigma(i)}$ as groups.

In particular, the decomposition given in Remark \ref{deco} is unique up to isomorphism on the $G$-stabilizers of $X$.
\end{coro}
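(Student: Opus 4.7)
The plan is to reduce the statement to Theorem \ref{iso} by rewriting each side of the given isomorphism as a single object of the form $X/G$ whose orbit stabilizers are precisely (up to isomorphism) the groups appearing as denominators. First I would set $G:=G_1\times\cdots\times G_n$ and, for each $i$, let $\widetilde{G}_i\leqslant G$ denote the $i$-th coordinate subgroup $\{1\}\times\cdots\times G_i\times\cdots\times\{1\}$, which is isomorphic to $G_i$ via the obvious map. Lemma \ref{isosi} then gives $\{\bullet\}/G_i\cong \{\bullet\}/\widetilde{G}_i$, Lemma \ref{punto} identifies this with $(G/\widetilde{G}_i)/G$, and part (b) of Lemma \ref{isosvarios}, iterated, lets me package the whole coproduct as $X/G$, where
\[
X=\bigsqcup_{i=1}^n G/\widetilde{G}_i
\]
is a $G$-set whose $n$ orbits have stabilizers $\widetilde{G}_i\cong G_i$. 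I would repeat the same construction on the right-hand side with $H:=H_1\times\cdots\times H_m$, producing $Y/H\cong\bigoplus_{j=1}^m\{\bullet\}/H_j$ where $Y$ has $m$ orbits with stabilizers isomorphic to the $H_j$.

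With this packaging in place, the hypothesis becomes $X/G\cong Y/H$ in $\lac_R$, and Theorem \ref{iso} applies verbatim: it yields $n=m$ together with a permutation $\sigma\in S_n$ such that $G_i\cong H_{\sigma(i)}$ as groups, which is the first assertion of the corollary. For the ``in particular'' clause, I would apply what has just been proved to the two decompositions of a single object $X/G$ supplied by Remark \ref{deco}, which gives at once that the multiset of stabilizer isomorphism classes is an invariant of the object $X/G$.

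There is no real obstacle in this argument: Theorem \ref{iso} carries the substantive content. The only point that needs care is checking that the reduction is legitimate, i.e.\ that $\bigoplus_{i=1}^n\{\bullet\}/G_i$ can really be presented as $X/G$ in a way that exposes the $G_i$ as the stabilizers of the $G$-orbits of $X$. This is exactly what the combination of Lemmas \ref{isosi}, \ref{punto} and \ref{isosvarios}(b) is designed to provide, once a common ambient group containing all the $G_i$ as subgroups (such as the direct product $G_1\times\cdots\times G_n$) has been chosen.
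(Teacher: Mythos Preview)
Your argument is correct and follows the same route as the paper: both rewrite the given coproduct as a single object $Z/A$ with $A=G_1\times\cdots\times G_n$ whose orbit stabilizers are isomorphic to the $G_i$, and then invoke Theorem \ref{iso}. The only difference is cosmetic: the paper asserts this presentation directly from the definition of $\oplus$ (``it is not hard to see\ldots''), whereas you justify it by chaining Lemmas \ref{isosi}, \ref{punto} and \ref{isosvarios}(b); your version simply makes the reduction more explicit.
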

\begin{proof}
It is not hard to see that the sum on the left-hand side will be an object on $\lac_R$ of the form $Z/A$, where $A=G_1\times \cdots\times G_n$ and $Z$ is an $A$-set composed by $n$ orbits with stabilizers isomorphic to $G_1,\, \ldots, G_n$. Likewise, the right-hand side of the isomorphism will be an object of the form $W/B$, where $B=H_1\times \cdots\times H_m$ and $W$ is a $B$-set composed by $m$ orbits with stabilizers isomorphic to $H_1,\, \ldots, H_m$. The result follows from Theorem \ref{iso}.
\end{proof}

\begin{coro}
Let $X/G$,  $Y/H$ and $Z/L$ be objects in $\lac_R$. We have a cancellation law for the sum in $\lac_R$, that is, if
\begin{displaymath}
\frac{X}{G}\oplus\frac{Y}{H}\,\cong\, \frac{X}{G}\oplus\frac{Z}{L},
\end{displaymath}
then $Y/H$ is isomorphic to $Z/L$.
\end{coro}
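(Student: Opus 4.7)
The plan is to reduce everything to the indecomposable form $\{\bullet\}/G$ and then apply the uniqueness statement of Corollary \ref{comoKSR}. By Remark \ref{deco}, each of the three objects decomposes as a finite direct sum
\[
\frac{X}{G}\cong\bigoplus_{i=1}^{n}\frac{\{\bullet\}}{G_i},\qquad \frac{Y}{H}\cong\bigoplus_{j=1}^{m}\frac{\{\bullet\}}{H_j},\qquad \frac{Z}{L}\cong\bigoplus_{k=1}^{p}\frac{\{\bullet\}}{L_k},
\]
where the $G_i$, $H_j$ and $L_k$ are the isomorphism types of the stabilizers of the orbits of $X$, $Y$ and $Z$ respectively.

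Substituting these decompositions into the hypothesized isomorphism $X/G\oplus Y/H\cong X/G\oplus Z/L$, we obtain
\[
\bigoplus_{i=1}^{n}\frac{\{\bullet\}}{G_i}\;\oplus\;\bigoplus_{j=1}^{m}\frac{\{\bullet\}}{H_j}\;\cong\;\bigoplus_{i=1}^{n}\frac{\{\bullet\}}{G_i}\;\oplus\;\bigoplus_{k=1}^{p}\frac{\{\bullet\}}{L_k}.
\]
By Corollary \ref{comoKSR}, the number of summands on both sides agree (so $n+m=n+p$, i.e.\ $m=p$) and the multisets of isomorphism classes of the stabilizing groups on the two sides coincide. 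That is, the multisets $\{G_1,\dots,G_n,H_1,\dots,H_m\}$ and $\{G_1,\dots,G_n,L_1,\dots,L_p\}$ of isomorphism classes are equal.

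Cancellation of multisets is purely set-theoretic: after removing a copy of each $G_i$ from both sides (one by one), we are left with equality of the multisets $\{H_1,\dots,H_m\}$ and $\{L_1,\dots,L_m\}$, so there exists $\tau\in S_m$ with $H_j\cong L_{\tau(j)}$ as groups for each $j$. Finally, Lemma \ref{isosi}, applied with the trivial bijection between one-point sets and the group isomorphism $H_j\cong L_{\tau(j)}$, yields $\{\bullet\}/H_j\cong\{\bullet\}/L_{\tau(j)}$ in $\lac_R$. Assembling these isomorphisms gives
\[
\frac{Y}{H}\cong\bigoplus_{j=1}^{m}\frac{\{\bullet\}}{H_j}\cong\bigoplus_{j=1}^{m}\frac{\{\bullet\}}{L_{\tau(j)}}\cong\bigoplus_{k=1}^{m}\frac{\{\bullet\}}{L_k}\cong\frac{Z}{L},
\]
as desired. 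The only nontrivial ingredient is the uniqueness of the $\{\bullet\}/G$ decomposition, which is precisely Corollary \ref{comoKSR}; the rest is formal multiset cancellation plus Lemma \ref{isosi}, so there is no serious obstacle.
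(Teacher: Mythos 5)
Your proof is correct and follows essentially the same route as the paper: both reduce the statement to the uniqueness of the $\{\bullet\}/G_i$-decomposition (Remark \ref{deco} together with Corollary \ref{comoKSR}). The paper phrases the reduction as an induction on the summands of $X/G$ and leaves the final multiset cancellation implicit, whereas you carry it out in one step and spell out that cancellation explicitly; the mathematical content is the same.
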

\begin{proof}
Since $X/G$ can be decomposed as
$\bigoplus_{i=1}^n\{\bullet\}/G_i$,
we can assume, proceeding by induction, that we have the following isomorphism
\begin{displaymath}
\frac{\{\bullet\}}{G}\oplus\frac{Y}{H}\,\cong\, \frac{\{\bullet\}}{G}\oplus\frac{Z}{L}.
\end{displaymath}
The result follows from the previous corollary.
\end{proof}

\begin{rem}
These results imply that $\lac_R$ is almost a Krull-Schmidt category. Indeed, Theorem \ref{deco} provides, for each object in $\lac_R$, a decomposition into indecomposable objects which is unique up to isomorphism. Nevertheless, for a group $L$, the endomorphism ring of $\{\bullet\}/L$, the double Burnside ring $RB(L,\,L)$, is not in general a local ring.
\end{rem}

\begin{prop}[Mackey formula]
Let $\langle U,\,\beta,\, \alpha\rangle$ in $RB^{H,\,G}(Y,\, X)$ and $\langle V,\, \delta,\,\gamma\rangle$ in $RB^{K,\, H}(Z,\, Y)$ be generating elements such that $U=(H\times G)/D$ is a transitive $(H\times G)$-set and $V=(K\times H)/E$ is a transitive $(K\times H)$-set. Suppose also that $\beta (D)=\gamma (E)$. Letting $H'=Stab_H(\beta(D))$, we have
\begin{displaymath}
\langle V,\, \delta,\,\gamma\rangle\circ\langle U,\,\beta,\, \alpha\rangle=\sum\limits_{h\in [A\backslash H'/ B]} \langle (K\times G) /(E*^{(h,\,1)}D),\,\hat{\delta}_h,\,\hat{\alpha}_h\rangle
\end{displaymath}where
\begin{displaymath}
E*^{(h,\,1)}D=\left\{(k,\,g)\in K\times G\mid \exists h'\in H \textrm{ s. t. } (k,\,h')\in E,\, (h',\,g)\in {}^{(h,\,1)} D\right\},
\end{displaymath}
the group $A$ is the projection of $E$ on $H$ and $B$ is the projection of $D$ on $H$. Letting $T_h=E*^{(h,\,1)}D$, we have the maps $\hat{\delta}_h((k,\,g)T_h)=k\delta(E)$ and $\hat{\alpha}_h((k,\,g)T_h)=g\alpha(D)$.
\end{prop}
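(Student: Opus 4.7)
The plan is to reduce to the classical Mackey formula for bisets and then restrict by the pullback condition. By Definition~\ref{lacat}, the composition $\langle V,\delta,\gamma\rangle\circ\langle U,\beta,\alpha\rangle$ equals $\langle V\hat\times_H U,\hat\delta,\hat\alpha\rangle$, where $V\hat\times_H U$ is the subset of the ordinary biset product $V\times_H U$ cut out by the equation $\gamma(v)=\beta(u)$. So my first step is to decompose $V\times_H U$ as a disjoint union of transitive $(K\times G)$-orbits, my second step is to identify which of those orbits meet $V\hat\times_H U$, and my third step is to read off the restricted maps $\hat\delta_h,\hat\alpha_h$.

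Write $v_0=\bar E\in V$ and $u_0=\bar D\in U$. The classical biset Mackey formula (cf.\ \cite{biset}) provides
\[
V\times_H U \;\cong\; \bigsqcup_{h\in[A\backslash H/B]}(K\times G)/(E*^{(h,1)}D),
\]
with the orbit indexed by $h$ represented by $v_0\otimes hu_0$ and with stabilizer $E*^{(h,1)}D$; this is a direct coset computation from the definition of the star product. Since $K$ and $G$ act trivially on $Y$, the condition $\gamma(v)=\beta(u)$ is invariant under the $(K\times G)$-action, so it suffices to test it on the representative. Using the equivariance rules $\gamma((k,h)v)=h\gamma(v)$ and $\beta((h,g)u)=h\beta(u)$ together with the hypothesis $\gamma(E)=\beta(D)$, the condition $\gamma(v_0)=\beta(hu_0)=h\beta(D)$ reduces to $h\beta(D)=\beta(D)$, that is, $h\in H'$. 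A short verification shows $A\subseteq H'$ --- if $(k,a)\in E$ then $(k,a)v_0=v_0$, whence $a\beta(D)=\gamma((k,a)v_0)=\gamma(v_0)=\beta(D)$, so $a\in H'$ --- and similarly $B\subseteq H'$; hence the $A$-$B$ double cosets containing any element of $H'$ lie entirely in $H'$, and the surviving orbits are indexed precisely by $[A\backslash H'/B]$.

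To finish, under the identification of the orbit through $v_0\otimes hu_0$ with $(K\times G)/T_h$ (where $T_h=E*^{(h,1)}D$) via $(k,g)T_h\leftrightarrow (k,g)\cdot[v_0\otimes hu_0]$, evaluation of $\hat\delta$ and $\hat\alpha$ yields $\hat\delta_h((k,g)T_h)=k\delta(E)$ and $\hat\alpha_h((k,g)T_h)=g\alpha(D)$, using the analogous equivariance rules $\delta((k,h)v)=k\delta(v)$ and $\alpha((h,g)u)=g\alpha(u)$. The substantive input is the classical biset Mackey decomposition; the new ingredient specific to $\lac_R$ is the observation in the previous paragraph that the pullback condition shrinks the index set from $[A\backslash H/B]$ to $[A\backslash H'/B]$. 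The main hazard will be bookkeeping: keeping straight that the left-factor morphisms $\gamma,\delta$ are affected by the inner $H$-action while the right-factor morphisms $\beta,\alpha$ are not, and translating correctly between the $(K\times G)$-action on $V\otimes_H U$ and its action on each factor.
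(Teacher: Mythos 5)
Your proof is correct and follows essentially the same route as the paper: both invoke Bouc's Mackey decomposition of $V\times_H U$ into $(K\times G)$-orbits indexed by $[A\backslash H/B]$, check $A,B\leqslant H'$ so that the $(K\times G)$-invariant pullback condition $\gamma(v)=\beta(u)$ selects exactly the orbits indexed by $[A\backslash H'/B]$, and then evaluate $\hat\delta$, $\hat\alpha$ on coset representatives. The only cosmetic difference is that you test the condition on the orbit representatives $v_0\otimes hu_0$, whereas the paper records it as the condition $h_1^{-1}h_2\in H'$ on a general element $[(k,h_1)E,(h_2,g)D]$.
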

\begin{proof}
Recall that the composition $\langle V,\, \delta,\,\gamma\rangle\circ\langle U,\,\beta,\, \alpha\rangle$ is equal to
$\langle V\hat{\times}_HU,\, \hat{\delta},\, \hat{\alpha}\rangle$,
where $V\hat{\times}_HU=\{[v,\,u]\mid \gamma(v)=\beta (u)\}$ and
\begin{displaymath}
\hat{\delta}([(k,\, h_1)E,\, (h_2,\, g)D])=\delta((k,\,h_1)E)=k\delta (E)\textrm{ and}
\end{displaymath}
\begin{displaymath}
\hat{\alpha}([(k,\, h_1)E,\, (h_2,\, g)D])=\alpha((h_2,\,g)D)=g\alpha (D).
\end{displaymath}
Observe that if $h$ is in $A$, there exists $k\in K$ such that $(k,\,h)$ is in $E$, hence
\begin{displaymath}
h\gamma (E)=\gamma ((k,\,h)E)=\gamma (E),
\end{displaymath}
and so, since $\gamma (E)=\beta (D)$, we have $A\leqslant H'$. In the same way we see that $B\leqslant H'$. Also, a similar argument shows that an element $[(k,\, h_1)E,\, (h_2,\, g)D]$ in $V\hat{\times}_HU$ satisfies that $h_1^{-1}h_2$ is in $H'$.
On the other hand, from Lemma 2.3.24 in \cite{biset}, we know that
\begin{displaymath}
V\times_H U\cong \bigsqcup \limits_{h\in [A\backslash H/ B]} (K\times G) /(E*^{(h,\,1)}D).
\end{displaymath}
In the proof of this, the orbits of the action of $K\times G$ on $V\times_HU$ are seen to be in bijection with $[A\backslash H/ B]$ through the map that sends the orbit of an element $[(k,\, h_1)E,\, (h_2,\,g)D]$ to $Ah_1^{-1}h_2B$. This implies that
\begin{displaymath}
V\hat{\times}_H U\cong \bigsqcup \limits_{h\in [A\backslash H'/ B]} (K\times G) /(E*^{(h,\,1)}D)
\end{displaymath}
as $(K\times G)$-sets.

Finally, since the stabilizer of an element in $V\hat{\times}_HU$, under the action of $K\times G$, is of the form $T_h$ with $h$ in $[A\backslash H'/B]$, we see that $\hat{\delta}$ can be decomposed as $\bigsqcup_{h\in[A\backslash H'/B]}\hat{\delta}_h$, and $\hat{\alpha}$ as $\bigsqcup_{h\in[A\backslash H'/B]}\hat{\alpha}_h$.
\end{proof}

\begin{teo}
\label{comp}
The category $\lac_R$ is the additive completion of the biset category.
\end{teo}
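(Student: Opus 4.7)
The plan is to exhibit an equivalence between the formal additive completion $\Omega_R^\oplus$ of the biset category and $\lac_R$, by constructing a fully faithful $R$-linear embedding $\Phi\colon \Omega_R \to \lac_R$ whose essential image generates $\lac_R$ under finite direct sums.

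First, I would define $\Phi$ on objects by $G \mapsto \{\bullet\}/G$. On morphisms, use the natural identifications
\begin{displaymath}
Hom_{\Omega_R}(G,H) = RB(H,G) \quad \textrm{and} \quad Hom_{\lac_R}\left(\frac{\{\bullet\}}{G},\frac{\{\bullet\}}{H}\right) = RB^{H,G}(\{\bullet\},\{\bullet\}) \cong RB(H,G),
\end{displaymath}
where the second isomorphism holds because a span $\{\bullet\}\leftarrow S\rightarrow\{\bullet\}$ is determined up to the defining equivalence by the $(H\times G)$-set $S$ alone, both projection maps being forced (compare Remark \ref{hom}). Declare $\Phi$ to be the identity under this identification. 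I would then check functoriality: when the outer objects are both singletons, the pullback condition $\gamma(v)=\beta(u)$ in the definition of $\hat{\times}_H$ is vacuously satisfied, so $V\hat{\times}_H U = V\times_H U$, which is precisely the composition of bisets in $\Omega_R$. Moreover, the identity $\langle G\times\{\bullet\},\underline{1},\overline{1}\rangle$ on $\{\bullet\}/G$ has underlying $(G,G)$-biset equal to the regular biset $G$, that is, the identity of $G$ in $\Omega_R$. Hence $\Phi$ is an $R$-linear fully faithful functor.

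Next I would invoke Remark \ref{deco}: every object $X/G$ of $\lac_R$ decomposes as $\bigoplus_{i=1}^{n}\{\bullet\}/G_i$, where the $G_i$ are the stabilizers of the $G$-orbits of $X$. Combined with the additivity of $\lac_R$ established in the previous section, this shows that every object of $\lac_R$ is a finite direct sum of objects in the essential image of $\Phi$.

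To conclude, I would promote $\Phi$ to an $R$-linear functor $\tilde{\Phi}\colon \Omega_R^\oplus \to \lac_R$ sending a formal sum $G_1\oplus\cdots\oplus G_n$ to $\{\bullet\}/G_1\oplus\cdots\oplus\{\bullet\}/G_n$ (computed in $\lac_R$), and a matrix of bisets to the corresponding morphism assembled via the biproduct injections and projections. Essential surjectivity of $\tilde{\Phi}$ is immediate from the preceding step, while fullness and faithfulness follow from the general formula $Hom(\bigoplus_i A_i,\bigoplus_j B_j)=\bigoplus_{i,j}Hom(A_i,B_j)$ which holds in $\Omega_R^\oplus$ by construction and in $\lac_R$ by additivity, together with the fact that $\Phi$ is fully faithful on the generators. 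The most delicate bookkeeping will be identifying the canonical injections $\{\bullet\}/G_i\to\bigoplus_i\{\bullet\}/G_i$ produced in the proof of additivity with the coordinate injections of the formal additive completion, so that the block decomposition of an arrow in $\lac_R$ computed via $\hat{\times}_H$ genuinely agrees with matrix multiplication in $\Omega_R^\oplus$. Once this matching is pinned down, the equivalence is formal.
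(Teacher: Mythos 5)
Your proposal is correct and follows essentially the same route as the paper: both construct the embedding $G\mapsto\{\bullet\}/G$ using the identification $RB^{H,G}(\{\bullet\},\{\bullet\})\cong RB(H,G)$, extend it to the matrix/formal-sum model of the additive completion, and deduce the equivalence from full faithfulness on generators together with the decomposition of Remark \ref{deco}. The only difference is that you spell out the verifications (that $\hat{\times}_H$ reduces to $\times_H$ over a singleton, that the identity span is the regular biset, and the block-matrix bookkeeping) which the paper compresses into ``it is easy to see that $S$ is full, faithful and dense.''
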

\begin{proof}
We define a functor $F:\Omega_R\rightarrow \lac_R$, by sending a group $G$ to the object $\{\bullet\}/G$ and the isomorphism class of a $(G,\, H)$-biset $U$  to the class of 
\begin{displaymath}
\{\bullet\}\leftarrow U\rightarrow\{\bullet\}
\end{displaymath}
 in $RB^{G,\, H}(\{\bullet\},\,\{\bullet\})$. As we have already noticed, $RB^{G,\, H}(\{\bullet\},\,\{\bullet\})$ is isomorphic to $RB(G, H)$,
then it is clear that $F$ is an $R$-linear faithful functor. 
 
The additive completion of $\Omega_R$, see Chapter VIII of Mac Lane \cite{maclane} for instance, is the category $Add (\Omega_R)$ consisting of $n$-tuples, for $n$ a non-negative integer,   of objects of $\Omega_R$, with arrows given by matrices of arrows in $\Omega_R$. There is a canonical functor $A:\Omega_R\rightarrow Add(\Omega_R)$ sending a group $G$ to the $1$-tuple $(G)$ and the class of a $(G,\, H)$-biset $U$ to the $1\times 1$ matrix $(U)$. We have then a unique functor $S:Add(\Omega_R)\rightarrow \lac_R$ such that $SA=F$. It is easy to see that $S$ is a full, faithful and dense functor, hence it is an equivalence of categories.
\end{proof}

\section{Functors from $\lac_R$ to $R$-Mod}

If $R$ is a commutative unital ring, we denote by
\begin{displaymath}
Fun_R(\lac_R,\, R\textrm{-Mod}),
\end{displaymath}
the category of $R$-linear functors from $\lac_R$ to $R$-Mod, with arrows given by natural transformations.

\textcolor{black}{ 
We see first the relation of these functors with Mackey functors. We will see that for a given group $G$, the restriction of a functor in $Fun_R(\lac_R,\, R\textrm{-Mod})$ to a certain subcategory of $\lac_R$ gives a \textit{fused Mackey functor for $G$}. Fused Mackey functors were introduced by Bouc in \cite{bham}, but they  had been previously considered, from a different point of view, by Hambleton, Taylor and Williams in \cite{hamtaywil}, where they are called \textit{conjugation invariant Mackey functors}. They are defined in \cite{bham} as $R$-linear functors from a category of spans $\mathcal{F}$ to $R$-Mod. The category $\mathcal{F}$ has for objects the finite $G$-sets, and the arrows from a given $G$-set $X$ to a $G$-set $Y$ is the quotient of $B^G(Y\times X)$ by the relation that identifies the classes of two spans 
\begin{displaymath}
\xymatrix{Y& U\ar[l]_\beta\ar[r]^\alpha & X }\quad\xymatrix{Y& U'\ar[l]_{\beta'}\ar[r]^{\alpha'} & X }
\end{displaymath}
if there exist an isomorphism  of $G$-sets $f:U\rightarrow U'$ and a morphism of $G$-sets $t:U\rightarrow G^c$, that satisfy
\begin{displaymath}
\beta'f=\beta \quad \textrm{and}\quad \alpha'f=t\cdot\alpha,
\end{displaymath}
where $G^c$ is the $G$-set $G$ with action given by conjugation and $t\cdot\alpha:U\rightarrow X$ sends $u$ to $t(u)\alpha (u)$. These are precisely the conditions we obtain in the next result.} 

\begin{prop}
Let $R$ be a commutative ring with unit and $G$ be a fixed finite group. There is a functor
\begin{displaymath}
A:Span_R(G\textrm{-set})\longrightarrow \lac_R
\end{displaymath}
\textcolor{black}{ 
such that for a functor $F$ in $Fun_R(\lac_R,\, R\textrm{-Mod})$, the composition $F\circ A$ is a fused Mackey functor for $G$.}
\end{prop}
\begin{proof}
The functor $A$ sends a $G$-set $X$ to the object $X/G$ in $\lac_R$, and the class of a span 
\begin{displaymath}
\xymatrix{Y& T\ar[l]_\beta\ar[r]^\alpha & X }
\end{displaymath}
to the arrow $\langle G\times T,\, \underline{\beta},\, \overline{\alpha} \rangle$, where $G\times T$ is a $(G\times G)$-set with the action 
\begin{displaymath}
(g_1,\, g_2)(g,\, t)=(g_1gg_2^{-1},\, g_1t),
\end{displaymath}
and the morphisms
\begin{displaymath}
\underline{\alpha}(g,\, t)=g^{-1}\alpha(t)\quad\textrm{and}\quad \overline{\beta}(g,\,t)=\beta(t),
\end{displaymath}
which are easily seen to be morphisms of $(G\times G)$-sets. Also, it is easy to see that $A$ is well defined on morphisms and sends the identity morphism of a $G$-set $X$, the class of the span $X\leftarrow X\rightarrow X$, to the identity $\langle G\times X,\, \underline{1},\, \overline{1} \rangle$ of $X/G$ in $\lac_R$. So, to see that $A$ is a functor it suffices to see that it preserves the composition. Let
\begin{displaymath}
\xymatrix{Z& S\ar[l]_\delta\ar[r]^\gamma &  Y}\quad\textrm{and}\quad \xymatrix{Y& T\ar[l]_\beta\ar[r]^\alpha & X}
\end{displaymath}
be two spans. Applying $A$ to these spans we obtain $\langle G\times S,\, \underline{\delta},\, \overline{\gamma} \rangle$ and $\langle G\times T,\, \underline{\beta},\, \overline{\alpha}\rangle$. Composition in $\lac_R$ is given by
$\langle U,\,\hat{\underline{\delta}},\,\hat{\overline{\alpha}} \rangle$, where
\[
\xymatrix{
&& U\ar@/_1pc/[dddll]_{\hat{\underline{\delta}}} \ar@/^1pc/[dddrr]^{\hat{\overline{\alpha}}}&&\\
&& W\ar[u]\ar[rd]\ar[ld] &&\\
& G\times S\ar[rd]^{\overline{\gamma}}\ar[ld]_{\underline{\delta}} & & G\times T \ar[rd]^{\overline{\alpha}}\ar[ld]_{\underline{\beta}}& \\
Z && Y && X
}
\]
with $U=(G\times S)\hat{\times}_G(G\times T)$. Thus elements in this set are of the form $[(g_1,\, s),\, (g_2,\,t)]$ and they satisfy $g_1^{-1}\gamma(s)=\beta(t)$. Also we have
\begin{displaymath}
\hat{\underline{\delta}}([(g_1,\, s),\, (g_2,\,t)])=\delta(s)\quad\textrm{and}\quad
\hat{\overline{\alpha}}([(g_1,\, s),\, (g_2,\,t)])=g_2^{-1}\alpha(t).
\end{displaymath}
 On the other hand, composition in $Span_R(G\textrm{-set})$ is given by the pullback
\[
\xymatrix{
&& Q \ar[rd]^{\pi_T}\ar[ld]_{\pi_S}&&\\
& S\ar[ld]_{\delta}\ar[rd]^{\gamma} & &  T\ar[ld]_{\beta}\ar[rd]^{\alpha} &\\
Z & & Y & & X
}
\]
where $Q$ is the set of couples $(s,\, t)$ such that $\gamma(s)=\beta(t)$, with a diagonal $G$-action. From this we obtain that the image of the composition under $A$ is $\langle G\times Q,\,\underline{\delta\pi_S}, \,\overline{\alpha\pi_T}\rangle$. 

We define a morphism $\varphi$ form $U$ to $G\times Q$ by
\begin{displaymath}
\varphi([(g_1,\, s),\, (g_2,\,t)])=(g_1g_2,\, (s,\, g_1t)).
\end{displaymath}
It is straightforward to see that $\varphi$ is well defined, that it has image in $G\times Q$ and that it is a morphism of $(G\times G)$-sets. Also, it is easy to see that $\varphi$ is surjective, to see that it is injective, observe that
\begin{displaymath}
[(g_1,\, s),\, (g_2,\,t)]=[(1,\, s),\, (g_1g_2,\,g_1t)],
\end{displaymath} 
so if  we have $(g_1g_2,\, (s,\, g_1t))=(g'_1g'_2,\, (s',\, g'_1t'))$, then
\begin{displaymath}
[(g_1,\, s),\, (g_2,\,t)]=[(g'_1,\, s'),\, (g'_2,\,t')].
\end{displaymath}
Finally, clearly we have $\underline{\delta\pi_S}\varphi=\hat{\underline{\delta}}$ and $\overline{\alpha\pi_T}\varphi=\hat{\overline{\alpha}}$.

\textcolor{black}{To see that by pre-composition with $A$ we obtain a fused Mackey functor, we show first that $A$ is not faithful (clearly it is not full either).} To see this, consider two spans of $G$-sets
\begin{displaymath}
\xymatrix{Y& U\ar[l]_\beta\ar[r]^\alpha & X }\quad\xymatrix{Y& U'\ar[l]_{\beta'}\ar[r]^{\alpha'} & X }
\end{displaymath}
and their corresponding images under $A$
\begin{displaymath}
\xymatrix{Y& G\times U\ar[l]_{\underline{\beta}}\ar[r]^{\overline{\alpha}} & X }\quad\xymatrix{Y& G\times U'\ar[l]_{\underline{\beta'}}\ar[r]^{\overline{\alpha'}} & X }.
\end{displaymath}
If there exists a morphisms $\varphi :G\times U\rightarrow G\times U'$ of $(G\times G)$-sets, then
\begin{displaymath}
\varphi (g,\, u)=\varphi ((1,\, g^{-1})(1,\, u))=(1,\, g^{-1})\varphi (1,\, u).
\end{displaymath}
This means that $\varphi$ is determined by its action in $\{1\}\times U$ and we can write
\begin{displaymath}
\varphi (g,\, u)=(1,\, g^{-1})(t(u),\, f(u))=(t(u)g,\, f(u)),
\end{displaymath}
for some functions $t:U\rightarrow G$ and $f:U\rightarrow U'$. Since $\varphi$ is a morphism of $(G\times G)$-sets, we also have that
\begin{displaymath}
\varphi ((k,\, h) (g,\, u))=\varphi (kgh^{-1},\, ku)=(t(ku)kgh^{-1},\, f(ku))
\end{displaymath}
is equal to $(k,\, h)(t(u)g,\, f(u))$, which is equal to $(kt(u)gh^{-1},\, kf(u))$. Hence, $f$ is a morphisms of $G$-sets and $t:U\rightarrow G^c$ is also a morphism of $G$-sets. It is not hard to verify that $\varphi$ is an isomorphism of $(G\times G)$-sets if and only if $f$ is an isomorphism of $G$-sets.

Now, if $\varphi$ also satisfies the conditions $\underline{\beta'}\varphi=\underline{\beta}$ and $\overline{\alpha'}\varphi=\overline{\alpha}$, then we have
\begin{displaymath}
\beta'f(u)=\beta(u)\quad\textrm{and}\quad \alpha'f(u)=t(u)\alpha (u).
\end{displaymath}
Therefore, the classes of the spans 
\begin{displaymath}
\xymatrix{Y& U\ar[l]_\beta\ar[r]^\alpha & X }\quad\xymatrix{Y& U'\ar[l]_{\beta'}\ar[r]^{\alpha'} & X }
\end{displaymath}
map to the same arrow $\langle G\times U,\, \underline{\beta},\, \overline{\alpha}\rangle$, if and only if there exist an isomorphism  of $G$-sets $f:U\rightarrow U'$ and a morphism of $G$-sets $t:U\rightarrow G^c$, that satisfy
\begin{displaymath}
\beta'f=\beta \quad \textrm{and}\quad \alpha'f=t\cdot\alpha.
\end{displaymath}
Then for example, for a $G$-set $U$ and a morphism $t:U\rightarrow G^c$, the two spans
\begin{displaymath}
\xymatrix{U& U\ar[l]_{id_U}\ar[r]^{id_U} & U }\quad\xymatrix{U& U\ar[l]_{id_U}\ar[r]^{t\cdot id_U} & U }
\end{displaymath} 
map to the identity arrow $\langle G\times U,\, \underline{id_U},\, \overline{id_U}\rangle$. Hence $A$ is not faithful. \textcolor{black}{The relation on the image of spans also shows that by pre-composing an $R$-linear functor $F:\lac_R\rightarrow R$-Mod with functor $A$, we obtain a fused Mackey functor $F\circ A$.}
\end{proof}

\begin{rem}
\textcolor{black}{We notice that n}ot every fused Mackey functor can be obtained in this way, in view of the next theorem and Remark 2.14 in \cite{bham}.
\end{rem}

For biset functors we have the following result.

\begin{teo}
For $R$ a commutative ring with unit we have
\begin{displaymath}
Fun_R(\Omega_R,\, R\textrm{-Mod})\cong Fun_R(\lac_R,\, R\textrm{-Mod}).
\end{displaymath}
\end{teo}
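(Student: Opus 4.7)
The plan is to reduce the statement to the universal property of additive completion. By Theorem \ref{comp}, $\lac_R$ is equivalent to $\mathrm{Add}(\Omega_R)$, and since $R$-Mod is an additive $R$-linear category, $R$-linear functors out of the additive completion of $\Omega_R$ should be equivalent to $R$-linear functors out of $\Omega_R$ itself; this is a standard consequence of the construction of $\mathrm{Add}(\Omega_R)$ recalled in the proof of Theorem \ref{comp}.

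Concretely, I would first define the restriction functor
\begin{displaymath}
\Phi:Fun_R(\lac_R,\,R\textrm{-Mod})\longrightarrow Fun_R(\Omega_R,\,R\textrm{-Mod}),\quad M\mapsto M\circ F,
\end{displaymath}
where $F:\Omega_R\to\lac_R$ is the faithful $R$-linear functor of Theorem \ref{comp} sending $G$ to $\{\bullet\}/G$. To exhibit a quasi-inverse $\Psi$, given a biset functor $N:\Omega_R\to R\textrm{-Mod}$, I extend it to $\lac_R$ by setting
\begin{displaymath}
\Psi(N)\left(\frac{X}{G}\right)=\bigoplus_{i=1}^n N(G_i),
\end{displaymath}
where the $G_i$ are stabilizers of representatives of the $G$-orbits of $X$, as in Remark \ref{deco}; this is well-defined up to canonical isomorphism by Corollary \ref{comoKSR}. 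On a generating morphism $\langle U,\beta,\alpha\rangle\in RB^{H,G}(Y,X)$, I decompose $U$ into transitive $(H\times G)$-orbits and, using Lemma \ref{punto} together with the Mackey formula preceding Theorem \ref{comp}, identify each transitive piece with a class in $RB(H_j,G_i)$ between appropriate stabilizers; then I apply $N$ componentwise and assemble the results into a matrix of $R$-linear maps between the direct sums.

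The routine checks are that $\Psi(N)$ is well-defined and respects identities, that $\Psi$ is itself a functor natural in $N$, and that $\Phi\circ\Psi\cong\mathrm{id}$ (which is immediate since on $\{\bullet\}/G=F(G)$ the construction recovers $N(G)$), and $\Psi\circ\Phi\cong\mathrm{id}$, which follows because any $R$-linear functor $M:\lac_R\to R\textrm{-Mod}$ preserves the finite direct-sum decomposition $X/G\cong\bigoplus_i\{\bullet\}/G_i$ provided by Remark \ref{deco}.

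The main obstacle is verifying that $\Psi(N)$ actually respects composition in $\lac_R$: one needs to check that the transitive decomposition of a composite $\langle V,\delta,\gamma\rangle\circ\langle U,\beta,\alpha\rangle$ provided by the Mackey formula is sent by $N$ to the composite of the $N$-images of the transitive pieces. Rather than expanding this bookkeeping by hand, the cleanest route is to transport the question across the equivalence $S:\mathrm{Add}(\Omega_R)\to\lac_R$ from Theorem \ref{comp}: on $\mathrm{Add}(\Omega_R)$ the extension of $N$ is forced by matrix multiplication of biset classes and functoriality is automatic, so the equivalence $S$ transfers this functoriality, and the naturality of Mackey decomposition, to $\lac_R$. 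This reduces the whole statement to the universal property of $\mathrm{Add}(\Omega_R)$ applied to the additive target $R\textrm{-Mod}$.
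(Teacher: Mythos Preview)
Your proposal is correct and rests on the same two ingredients the paper uses: the universal property of the additive completion (Theorem~\ref{comp}) and the fact that $R$-linear functors into an additive category preserve finite direct sums. The organization differs slightly. The paper never builds your quasi-inverse $\Psi$ explicitly; instead it shows directly that $\Phi$ is full and faithful (because any $R$-linear $M:\lac_R\to R\textrm{-Mod}$ is determined on a general $X/G$ by its values on the summands $\{\bullet\}/G_i$, which is exactly your $\Psi\Phi\cong\mathrm{id}$ observation), and then shows $\Phi$ is dense by invoking the universal property of $\mathrm{Add}(\Omega_R)$ to produce $\tilde{M}$ with $\tilde{M}F\cong M$. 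Your route---first writing down $\Psi$ via orbit decompositions and the Mackey formula, then conceding that functoriality is best checked by transporting along $S:\mathrm{Add}(\Omega_R)\to\lac_R$---arrives at the same place but with an unnecessary detour: once you invoke the universal property to handle composition, you have essentially reproduced the paper's density argument, and the explicit formula for $\Psi(N)$ on morphisms becomes redundant. The paper's packaging is a bit cleaner; yours has the advantage of making the extended functor concrete on objects.
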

\begin{proof}
Since we have an $R$-linear functor $F:\Omega_R\rightarrow \lac_R$, which is also faithful and injective in objects, we have a functor
\begin{displaymath}
\Phi :Fun_R(\lac_R,\, R\textrm{-Mod})\longrightarrow Fun_R(\Omega_R,\, R\textrm{-Mod}),
\end{displaymath}
defined by pre-composition with $F$.

Given $\zeta :T_1\rightarrow T_2$, a natural transformation in $Fun_R(\lac_R,\, R\textrm{-Mod})$, we have that $\Phi (\zeta):T_1F\rightarrow T_2F$ is a natural transformation  defined in a group $G$ as
\begin{displaymath}
\Phi (\zeta)_G=\zeta_{\frac{\{\bullet\}}{G}}:T_1\left(\frac{\{\bullet\}}{G}\right)\rightarrow T_2\left(\frac{\{\bullet\}}{G}\right).
\end{displaymath}
That $\Phi$ is a full and faithful functor comes then from the fact that an $R$-linear functor between  additive $R$-linear categories preserves coproducts. A proof of this fact can be found in Chapter VIII of \cite{maclane}. So, to see that $\Phi$ is an equivalence of categories, it remains to see that it is dense. But, as seen in Theorem \ref{comp}, the category $\lac_R$ is equivalent to the additive completion of $\Omega_R$, so if we have a biset functor $M$, that is, an $R$-linear functor $M:\Omega_R\rightarrow R$-Mod, then there exists a unique functor $\tilde{M}:\lac_R\rightarrow R$-Mod such that $\tilde{M}F\cong M$.
\end{proof}

\begin{ejem}
With the help of Remark \ref{hom}, we see that the extension of the Burnside functor $RB:\Omega_R\rightarrow R$-Mod to $\lac_R$ is the functor $RB^{\,\_\,,\, \{1\}}(\,\_\,,\, \{\bullet\})$, which maps an object $X/G$ in $\lac_R$ to $RB(X)$, as defined in Section 1.
\end{ejem}

\section*{Acknowledgements}
During the writing  of this paper, Alberto G. Raggi-C\'ardenas and Nadia Romero were partially supported by the 2015 UC MEXUS-CONACYT Collaborative Research Grant ``Representation rings of finite groups''. Nadia Romero was also partially supported by SEP-PRODEP, project PTC-486.

The authors thank Serge Bouc and Radu Stancu, for their most valuable comments and suggestions.

{
\centerline{\rule{5ex}{.1ex}}
\begin{flushleft}
Jes\'us Ibarra, {\tt jtitacho84@gmail.com},\\
 Alberto G. Raggi-C\'ardenas, {\tt graggi@matmor.unam.mx}.\\
Centro de ciencias matem\'aticas, UNAM.\\
 Apartado postal 61-3 (Xangari), 58089, Morelia, Mich., Mexico. 
 \end{flushleft}
 \begin{flushleft}
Nadia Romero, {\tt nadia.romero@ugto.mx}.\\ 
Departamento de matem\'aticas, Universidad de Guanajuato.\\
Jalisco s/n, Mineral de Valenciana, 36240, Guanajuato, Gto., Mexico.\\

\end{flushleft}
}

\bibliographystyle{plain}
\bibliography{tacho}

\end{document}